\newtheorem{theorem}{Theorem}[section]
\newtheorem{lemma}[theorem]{Lemma}
\newtheorem{corollary}[theorem]{Corollary}
\theoremstyle{definition}
\theoremstyle{remark}
\numberwithin{equation}{section}
\begin{document}

\title[Linear semigroups with coarsely dense orbits]{Linear semigroups with coarsely dense orbits}

\author[H. Abels]{H. Abels}
\address{Fakult\"{a}t f\"{u}r Mathematik, Universit\"{a}t Bielefeld, Postfach 100131, D-33501 Bielefeld, Germany}
\email{abels@math.uni-bielefeld.de}

\author[A. Manoussos]{A. Manoussos}
\address{Fakult\"{a}t f\"{u}r Mathematik, SFB 701, Universit\"{a}t Bielefeld, Postfach 100131, D-33501 Bielefeld, Germany}
\email{amanouss@math.uni-bielefeld.de}
\urladdr{http://www.math.uni-bielefeld.de/~amanouss}
\thanks{During this research the second author was fully supported by SFB 701 ``Spektrale Strukturen und
Topologische Methoden in der Mathematik" at the University of Bielefeld, Germany.}

\date{}

\subjclass[2010]{Primary 47D03 20G20; Secondary 22E10, 22E15, 37C85, 47A16.}

\keywords{Coarsely dense orbit, $D$-dense orbit, hypercyclic semigroup of matrices.}

\begin{abstract}
Let $S$ be a finitely generated abelian semigroup of invertible linear operators on a finite dimensional real or complex vector space $V$. We show that every coarsely
dense orbit of $S$ is actually dense in $V$. More generally, if the orbit contains a coarsely dense subset of some open cone $C$ in $V$ then the closure of the orbit
contains the closure of $C$. In the complex case the orbit is then actually dense in $V$. For the real case we give precise information about the possible cases for the
closure of the orbit.
\end{abstract}

\maketitle

\section{Introduction}
Recall that a subset $Y$ of a metric space $(X,d)$ is called \textit{D-dense} in $X$ if the open balls of radius $D$ with centers at points of $Y$ cover $X$. The subset
$Y$ is called \textit{coarsely dense} in $X$ if there is a positive number $D$ such that $Y$ is $D$-dense in $X$.

\begin{theorem} \label{main}
Let $V$ be a finite dimensional real or complex vector space endowed with a norm. Let $S$ be a subsemigroup of $GL(V)$ generated by a finite set of commuting elements.
Then every coarsely dense orbit of $S$ in $V$ is actually dense.
\end{theorem}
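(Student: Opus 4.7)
The plan is to split the proof into a short ``contraction lemma'' plus an argument producing the required contraction from the coarse-density hypothesis.

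\textbf{Contraction lemma.} If $S$ contains an element $c$ with spectral radius $\rho(c)<1$, then $\|c^n\|_{\mathrm{op}}\to 0$ and coarse density forces density. Indeed, given $w\in V$ and $\varepsilon>0$, choose $n$ with $\|c^n\|_{\mathrm{op}}D<\varepsilon$; since $c\in GL(V)$, the vector $c^{-n}w$ lies in $V$, so $D$-density of $Sv$ yields $s\in S$ with $\|sv-c^{-n}w\|<D$; then $c^n s\in S$ (semigroup) and
\[
\|c^n s\,v - w\| \;=\; \|c^n(sv-c^{-n}w)\| \;\leq\; \|c^n\|_{\mathrm{op}}\,D \;<\; \varepsilon.
\]

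\textbf{Producing the contraction.} It remains to show that coarse density of $Sv$ forces $S$ to contain an element of spectral radius $<1$. Using commutativity of the generators, decompose $V_{\mathbb C}=\bigoplus_\chi V_\chi$ into joint generalized eigenspaces and set $\ell_\chi:=(\log|\chi(A_i)|)_{i=1}^k\in\mathbb R^k$. The spectral radius of $\prod_i A_i^{n_i}$ equals $\exp(\max_\chi\mathbf n\cdot\ell_\chi)$, so a contraction in $S$ exists iff some $\mathbf n\in\mathbb Z_{\geq 0}^k$ satisfies $\mathbf n\cdot\ell_\chi<0$ for every $\chi$ (obtain this first over $\mathbb R_{\geq 0}^k$ and then clear denominators). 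If no such $\mathbf n$ exists, the theorem of the alternative (Farkas/Gordan) provides non-negative weights $(p_\chi)$, not all zero, with $\sum_\chi p_\chi\log|\chi(A_i)|\geq 0$ for every $i$. The function
\[
f(v)\;:=\;\prod_\chi\|v_\chi\|^{2p_\chi}
\]
is then non-decreasing along the $S$-orbit to leading order (polynomial Jordan-block corrections appear but are dominated by the exponential dynamics). On the other hand, applying $D$-density of $Sv$ to target vectors lying on a coordinate subspace $V_{\chi_0}$ with $p_{\chi_0}>0$ gives, via $\|(sv)_\chi\|\leq D'$ for $\chi\neq\chi_0$, a uniform upper bound $f(sv)\leq C$ along the orbit. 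If the inequality is strict for some $i_0$, the growth $f(A_{i_0}^n v)\gtrsim e^{cn}f(v)$ against this bound forces $f(v)=0$, placing $v$ in a proper $S$-invariant coordinate subspace, so $Sv$ cannot be coarsely dense in $V$. If equality holds throughout, $f$ is $S$-invariant (in the semisimple case) and $Sv$ is confined to the proper closed level set $\{f=f(v)\}$, which is not coarsely dense: along the radial direction $v_0=R\sum_\chi e_\chi$ one has $f(v_0)=R^{2P}$ with $P=\sum p_\chi$, and the nearest point of $\{f=f(v)\}$ is at distance of order $R$, arbitrarily large.

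\textbf{Main obstacle.} The technical crux is the existence step. Two subtleties require care: (i) the polynomial Jordan-block corrections to $f$ must be shown to be dominated by the exponential factors $\prod|\chi(A_i)|^{2n_i p_\chi}$, and in the equality subcase one must verify that these corrections still force $f(v)=0$ whenever the nilpotent parts act non-trivially on some $V_\chi$ with $p_\chi>0$; and (ii) the level set $\{f=c\}$ need not be algebraic when the $p_\chi$ are irrational, so its non-coarse-density has to be checked directly, which the radial computation above handles.
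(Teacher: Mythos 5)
Your contraction lemma is correct, and it is close in spirit to how the paper actually concludes (once homotheties $M_r$ with $r<1$ are available in the closure of $S$, coarse density is converted into density; see the end of the proof of lemma \ref{Sdiag}). The gap is in the step that is supposed to produce the contraction. The claimed \emph{uniform} bound $f(sv)\leq C$ along the orbit does not follow from $D$-density: coarse density says every point of $V$ lies within $D$ of some orbit point, not that orbit points stay near your chosen targets in $V_{\chi_0}$; and even for those orbit points that do lie within $D$ of a target $w\in V_{\chi_0}$, the factor $\|(sv)_{\chi_0}\|^{2p_{\chi_0}}$ is only bounded by $(\|w\|+D)^{2p_{\chi_0}}$, which is unbounded as $w$ ranges over the subspace (and it must be: when the theorem holds the orbit is dense, so $f$ is unbounded on it). Hence the ``strict inequality'' subcase yields no contradiction. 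The failure is already visible in dimension one: take $V=\mathbb{R}$ and all generators of modulus $\geq 1$ (e.g.\ $S$ generated by $2$); there is a single character, the condition $\|(sv)_\chi\|\leq D'$ for $\chi\neq\chi_0$ is vacuous, and your mechanism produces nothing, although ruling out coarse density here is exactly the nontrivial content of the implication ``coarsely dense $\Rightarrow$ $S$ contains an element of spectral radius $<1$''.

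What is missing is a quantitative growth ingredient: a finitely generated subsemigroup of $\mathbb{R}$ is either dense or has only polynomially many elements in $[-n,n]$ (lemma \ref{polS}), whereas $D$-density of an orbit along a ray forces linearly many points up to radius $T$ (this is how corollary \ref{addense} and the two cases in the paper's main proof work). No counting of this kind appears in your proposal, and without it the existence of a contraction is not established; note also that in higher dimensions a crude count of norms is not enough, since polynomially many values can still be linearly dense, so the reduction to one-dimensional homomorphisms has to be done with some care. Two further holes you partly acknowledge: the equality subcase with nontrivial Jordan blocks is not resolved (the orbit is then not confined to a level set of $f$, so the radial distance computation does not apply), and when $\chi_0$ is a non-real character the subspace $V_{\chi_0}$ lies in $V_{\mathbb{C}}$ and need not meet the real space $V$ where the coarse density hypothesis lives, so the targets must be taken in $(V_{\chi_0}\oplus V_{\overline{\chi_0}})\cap V$.
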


We show a more general result, theorem \ref{general}, as follows. Suppose an orbit $\mathcal{O}$ of $S$ has a subset which is coarsely dense in some open cone $C$ in
$V$. Then we show that the closure of $\mathcal{O}$ is a cone which contains $C$. We also give detailed information about the structure of the orbits of the closure
$\overline{S}$ of $S$. The topology we refer to without further comment here and throughout the whole paper will be the Euclidean topology. We will sometimes also use
the Zariski topology. But then we will be more specific and use expressions like Zariski-dense etc. Under our more general hypotheses we prove furthermore the following
claim, see theorem \ref{th41}. The closure $\overline{S}$ of $S$ in $GL(V)$ is actually a group. There are only finitely many maximal $\overline{S}$-invariant vector
subspaces of $V$. Let $U$ be the complement of their union. Then $U$ has a finite number of connected components which are open cones in $V$. For every $v\in U$ the
orbit $\overline{S}\cdot v$ is a union of connected components of $U$, in particular an open cone.

The original motivation for work on this paper came from the theory of linear operators. N.S. Feldman \cite{Fe2} initiated the study of linear operators $T:X\to X$ on a
Banach space $X$ which have a coarsely dense orbit. Such orbits arise naturally when perturbing vectors with dense orbits. For instance, if the orbit of a vector $x\in
X$ is dense in $X$ and $y\in X$ is a vector with a bounded orbit then the orbit $\mathcal{O}(x+y,T)$ is coarsely dense. In the same paper, Feldman showed that it is
possible for an orbit to be coarsely dense without being dense in $X$. He also showed that if there exists a coarsely dense orbit then there exists (a possibly
different) vector with a dense orbit; see \cite[Theorem 2.1]{Fe2}. This result is based on the fact that in infinite dimensional Banach spaces there exists a sequence of
norm-one vectors $\{ x_n\}$ and a positive number $\varepsilon$ such that $\| x_n - x_k \| \geq \varepsilon$ for all $n\neq k$. Coarse density of orbits of a single
operator is a phenomenon that occurs only in infinite dimensional vector spaces but coarse density of orbits of finitely generated semigroups may occur also on finite
dimensional vector spaces. In contrast with the case of a single operator on an infinite dimensional space, every coarsely dense orbit of a subsemigroup of $GL(V)$
generated by a finite set of commuting elements is dense as the main result of the present paper shows.

The plan of the paper is the following. In section 2 we start by showing that under a quite natural and obviously necessary condition the closure $\overline{S}$ of a
subsemigroup $S$ of a vector group is actually a group (Theorem \ref{posneg}). This easily implies the same criterion for subsemigroups of compactly generated abelian
locally compact topological groups, by Pontryagin's structure theorem, corollary \ref{cor24}. It has as corollaries a criterion for density, corollaries \ref{SdenseR}
and \ref{cor25}(3), and for cocompactness, corollary \ref{cor25}(1). These statements  may be regarded as duality statements for subsemigroups of such abelian groups.
The general philosophy is: Test by applying continuous homomorphisms to $\mathbb{R}$. Thus the separating hyperplane theorem for closed convex cones is used in an
essential way. These results of section 2 may be of independent interest.

In section 3 we show our main theorem in the more general form mentioned above, namely for cones with inner points. At a crucial point (lemma \ref{lemma37}) we use a
general result about algebraic actions of algebraic groups. In section 4 we give detailed information about the orbits of $\overline{S}$, part of which was described
above. Section 5 contains remarks and questions.

\section{A density criterion for semigroups}
\begin{theorem} \label{posneg}
Let $S$ be a subsemigroup of $\mathbb{R}^n$ with the property that whenever a linear form $l:\mathbb{R}^n\to\mathbb{R}$ has a positive value on $S$ it also has a
negative value on $S$. Then the closure of $S$ is a subgroup of $\mathbb{R}^n$.
\end{theorem}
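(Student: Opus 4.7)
\emph{Strategy and reduction.} Replacing $\mathbb{R}^n$ by the real linear span of $S$ (the hypothesis on linear forms is inherited and the Euclidean closure is unchanged), I may assume $S$ spans $\mathbb{R}^n$ as a vector space. Since $\overline{S}$ is automatically a closed subsemigroup of $\mathbb{R}^n$, it suffices to show $-s_0\in\overline{S}$ for every $s_0\in S$: continuity of negation then gives $-\overline{S}\subset\overline{S}$, hence $\overline{S}=-\overline{S}$ is closed under both addition and negation and contains $0$, so it is a subgroup.

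\emph{Step 1: the positive cone of $S$ equals $\mathbb{R}^n$.} Let $C$ denote the closed convex cone generated by $S$. If $C\neq\mathbb{R}^n$, the separating hyperplane theorem for closed convex cones yields a nonzero linear form $\ell$ with $\ell\ge 0$ on $C$, hence $\ell\ge 0$ on $S$. The contrapositive of the hypothesis forces $\ell\le 0$ on $S$, so $\ell\equiv 0$ on $S$; as $S$ spans $\mathbb{R}^n$, $\ell=0$, a contradiction. Thus $C=\mathbb{R}^n$, and the positive cone $P(S):=\{\sum_i\alpha_i s_i : s_i\in S,\ \alpha_i\ge 0\}$, being a convex subset of $\mathbb{R}^n$ whose closure is $\mathbb{R}^n$, equals $\mathbb{R}^n$ (any convex set in $\mathbb{R}^n$ with full closure has interior equal to the interior of its closure). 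In particular, each nonzero $s_0\in S$ admits a representation $-s_0=\sum_{i=1}^m\alpha_i s_i$ with $s_i\in S$ and real $\alpha_i>0$ (after dropping zero-coefficient terms).

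\emph{Step 2: Diophantine approximation and a semigroup correction.} The main obstacle is that the previous representation has real coefficients, whereas the semigroup $S$ only sees sums $\sum n_i s_i$ with positive integer $n_i$. I would invoke Dirichlet's simultaneous approximation theorem: for any $\varepsilon>0$ there exist $q\in\mathbb{Z}_{\ge 1}$ and $p_1,\ldots,p_m\in\mathbb{Z}$ with $|q\alpha_i-p_i|<\varepsilon$, and for $\varepsilon<\min_i\alpha_i$ every $p_i\ge 1$. The crucial trick is not to divide by $q$---which would leave $S$---but to add $(q-1)s_0\in S$. Set
\[
u_\varepsilon:=\sum_{i=1}^m p_i s_i+(q-1)s_0;
\]
this lies in $S$ by the semigroup property (for $q=1$ the second summand is zero, and otherwise $(q-1)s_0=s_0+\cdots+s_0\in S$). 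The estimate
\[
\|u_\varepsilon-(-s_0)\|=\Bigl\|\sum_{i=1}^m p_i s_i+qs_0\Bigr\|=\Bigl\|\sum_{i=1}^m(p_i-q\alpha_i)s_i\Bigr\|\le\varepsilon\sum_{i=1}^m\|s_i\|
\]
shows $u_\varepsilon\to -s_0$ as $\varepsilon\to 0$, so $-s_0\in\overline{S}$, completing the proof. The whole argument hinges on this bridging step: Step~1 gives only real-coefficient relations, and the semigroup-correction $+(q-1)s_0$ is exactly what converts the Dirichlet approximation to $-qs_0$ into an approximation to $-s_0$ inside $S$.
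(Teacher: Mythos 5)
Your proof is correct and follows essentially the same route as the paper: both hinge on using the separating hyperplane theorem to write $-s_0$ (the paper's $-x$) as a combination of elements of $S$ with positive real coefficients, approximating those coefficients simultaneously by positive integers, and then adding $(q-1)s_0$ (the paper's $(m-1)x$) to land back inside $S$ at distance $O(\varepsilon)$ from $-s_0$. The only difference is one of packaging: you upgrade $\overline{P(S)}=\mathbb{R}^n$ to $P(S)=\mathbb{R}^n$ by convexity and invoke Dirichlet's simultaneous approximation theorem for the coefficients, whereas the paper chooses a basis $e_1,\dots,e_n$ of elements of $S$ from the dense set of directions and uses recurrence of $-mx$ in the compact torus $\mathbb{R}^n/\Gamma$ --- the same pigeonhole fact in different clothing.
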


All closed subgroups $A$ of $\mathbb{R}^n$ are of the following form. There is a decomposition $\mathbb{R}^n=V_1\oplus V_2\oplus V_3$ into three (not necessarily
non-zero) vector subspaces $V_1,V_2,V_3$ of $\mathbb{R}^n$ such that $A=V_1\oplus \Gamma_2$, where $\Gamma_2$ is a lattice in $V_2$. Thus

\begin{corollary} \label{SdenseR}
Let $S$ be a subsemigroup of $\mathbb{R}^n$ such that $l(S)$ has dense image in $\mathbb{R}$ for every non-zero linear form $l:\mathbb{R}^n\to\mathbb{R}$. Then $S$ is
dense in $\mathbb{R}^n$.
\end{corollary}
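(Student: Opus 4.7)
The plan is to reduce to Theorem \ref{posneg} and then use the structure theorem for closed subgroups of $\mathbb{R}^n$ stated in the paragraph just before the corollary.

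First I would verify the hypothesis of Theorem \ref{posneg}. Let $l:\mathbb{R}^n\to\mathbb{R}$ be any linear form. If $l=0$ the condition in Theorem \ref{posneg} is vacuous, and otherwise the hypothesis of the corollary forces $l(S)$ to be dense in $\mathbb{R}$, so in particular $l(S)$ contains both positive and negative values. Thus Theorem \ref{posneg} applies and $A:=\overline{S}$ is a closed subgroup of $\mathbb{R}^n$.

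Now I would invoke the structure theorem: choose a decomposition $\mathbb{R}^n=V_1\oplus V_2\oplus V_3$ with $A=V_1\oplus\Gamma_2$ for some lattice $\Gamma_2$ of full rank in $V_2$. The goal is to show $V_2=V_3=0$. Suppose $V_3\neq 0$. Then the annihilator of $V_1\oplus V_2$ in $(\mathbb{R}^n)^*$ is nonzero, so there exists a nonzero linear form $l$ vanishing on $V_1\oplus V_2$, hence on $A$, hence on $S$. But then $l(S)=\{0\}$ is not dense in $\mathbb{R}$, contradicting the hypothesis. So $V_3=0$. Suppose next that $V_2\neq 0$. Pick a $\mathbb{Z}$-basis $e_1,\dots,e_k$ of $\Gamma_2$ (which is automatically an $\mathbb{R}$-basis of $V_2$) and define $l$ on $\mathbb{R}^n$ by $l(e_1)=1$, $l(e_j)=0$ for $j\ge 2$, and $l\equiv 0$ on $V_1\oplus V_3$. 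Then $l\neq 0$ and $l(A)=l(V_1)+l(\Gamma_2)=\mathbb{Z}$. Since $l$ is continuous, $\overline{l(S)}\subseteq l(\overline{S})=l(A)=\mathbb{Z}$, so $l(S)$ is not dense in $\mathbb{R}$, contradiction. Hence $V_2=0$ as well, so $A=V_1=\mathbb{R}^n$, which is exactly the claim that $S$ is dense.

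I do not expect any genuine obstacle here; the only point requiring mild care is the direction of the inclusion for images of closures under continuous maps (one has $l(\overline{S})\supseteq \overline{l(S)}$ for continuous $l$, which is exactly what the lattice case needs, together with the trivial inclusion $l(S)\subseteq l(A)$ used in the $V_3$ case). The essential content of the corollary is the passage from ``semigroup'' to ``subgroup'' provided by Theorem \ref{posneg}; once that is available, the structure theorem reduces density to a direct check on two explicitly constructed linear forms.
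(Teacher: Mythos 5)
Your proposal is correct and is essentially the paper's intended argument: the density hypothesis gives both positive and negative values of every nonzero linear form on $S$, Theorem \ref{posneg} makes $\overline{S}$ a closed subgroup, and the structure theorem $\overline{S}=V_1\oplus\Gamma_2$ is then killed off by exhibiting a nonzero linear form with non-dense image unless $V_1=\mathbb{R}^n$. One small slip: the general inclusion for continuous maps goes the other way, $l(\overline{S})\subseteq\overline{l(S)}$, but this does not matter here since the trivial inclusion $l(S)\subseteq l(\overline{S})=\mathbb{Z}$ already shows $l(S)$ is not dense.
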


We have good criteria for when a subsemigroup of $\mathbb{R}$ is dense, see corollary \ref{denscycl} and lemma \ref{polS}. For a more general form of this corollary see
corollary \ref{cor25}(3).

\medskip

\noindent\textit{Proof of theorem \ref{posneg}.} We may assume that $S$ spans $\mathbb{R}^n$, equivalently that $l(S)$ is non-zero for every non-zero linear form
$l:\mathbb{R}^n\to\mathbb{R}$. Otherwise we consider the vector subspace of $\mathbb{R}^n$  which is the intersection of the kernels of all linear forms
$l:\mathbb{R}^n\to\mathbb{R}$ which vanish on $S$. Let $P(S)=\{ r\cdot s\,;\, r\geq 0,\,s\in S\}$ be the positive hull of $S$. We claim that the closure
$\overline{P(S)}$ of $P(S)$ is not only a closed cone but also convex. Namely, since $S$ is a semigroup, if $s_1$, $s_2$ are elements of $S$ all positive rational linear
combinations of $s_1$ and $s_2$ are in $P(S)$ and hence $\overline{P(S)}$ contains the convex cone generated by $s_1$ and $s_2$. This easily implies that
$\overline{P(S)}$ is convex.

We next claim that $\overline{P(S)}=\mathbb{R}^n$. Otherwise there would be a non-zero linear form $l:\mathbb{R}^n\to\mathbb{R}$ such that $l(\overline{P(S)})\geq 0$, by
the separating hyperplane theorem. But this contradicts our hypotheses about $S$. Let us fix a norm $\| \cdot \|$ on $\mathbb{R}^n$. Then the set of directions $\{
\frac{s}{\| s\|}\,;\, s\in S,\, s\neq 0\}$ of $S$ is dense in the norm $1$ sphere $S^1=\{ v\in\mathbb{R}^n\,;\, \| v\| =1\}$ since $P(S)$ is dense in $\mathbb{R}^n$. Let
$x\neq 0$ be an element of $S$. We claim that $-x\in \overline{S}$. This implies that $-\overline{S}\subset \overline{S}$ and hence the theorem. Given $x$, there is a
basis $e_1,\ldots,e_n$ of $\mathbb{R}^n$ consisting of elements of $S$ such that $x=\alpha_1 e_1+\ldots +\alpha_n e_n$ with $\alpha_i <0$ for $i=1,\ldots,n$, by the
density of the set of directions of $S$, since if $x$ is a linear combination of a basis such that all the coefficients have negative sign then the same holds for a
nearby basis and hence also for one with nearby directions. Let $\Gamma$ be the lattice in $\mathbb{R}^n$ generated by $e_1,\ldots,e_n$. There is a sequence of natural
numbers $m_j\to\infty$ such that $-m_jx \mod \Gamma$ converges to the identity element in the compact group $\mathbb{R}^n/ \Gamma$. So for every $\varepsilon >0$ there
is a large natural number $m$ and integers $b_1,\ldots,b_n$ such that $\| -mx-b_1e_1-\ldots -b_ne_n \| <\varepsilon$. Comparing coefficients we see that $b_i>0$ for
$i=1,\ldots, n$. So the element $b_1e_1+\ldots +b_ne_n +(m-1)x$ of $S$ has distance less than $\varepsilon$ from $-x$, which implies our claim. \qed

\medskip

Theorem \ref{posneg} has as special case the following corollary.

\begin{corollary} \label{denscycl}
Let $S$ be a subsemigroup of $\mathbb{R}$ which contains both a positive and a negative real number. Then $S$ is either dense in $\mathbb{R}$ or a cyclic subgroup of
$\mathbb{R}$.
\end{corollary}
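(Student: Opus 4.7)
The plan is to specialize Theorem \ref{posneg} to the one-dimensional case and then analyze the possible shapes of a closed subgroup of $\mathbb{R}$. First I would verify the hypothesis of Theorem \ref{posneg} for $n=1$: every linear form $l\colon\mathbb{R}\to\mathbb{R}$ is multiplication by some scalar $c$. If $c>0$ and $l$ takes a positive value on $S$, then since $S$ contains a negative number by assumption, $l$ also takes a negative value on $S$; the case $c<0$ is symmetric, and $c=0$ is trivial. Thus Theorem \ref{posneg} applies and $\overline{S}$ is a closed subgroup of $\mathbb{R}$.

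Next I would invoke the classification of closed subgroups of $\mathbb{R}$ (which is mentioned right after Theorem \ref{posneg}): any such subgroup is either $\{0\}$, all of $\mathbb{R}$, or of the form $\alpha\mathbb{Z}$ for some $\alpha>0$. Since $S$ contains a nonzero element, the first case is ruled out, and in the second case $S$ is dense in $\mathbb{R}$, which is one of the two desired conclusions.

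The remaining case is $\overline{S}=\alpha\mathbb{Z}$, and here the only mild subtlety is upgrading the statement from $\overline{S}$ to $S$ itself: I need to argue that $S=\overline{S}$, not merely that $S$ sits inside a cyclic group. This follows because $\alpha\mathbb{Z}$ is discrete, so every point $x\in\overline{S}$ has a neighborhood $U$ with $U\cap\overline{S}=\{x\}$; then $U\cap S\subseteq\{x\}$, but the definition of closure forces $U\cap S$ to be nonempty, so $x\in S$. Hence $S=\overline{S}=\alpha\mathbb{Z}$ is a cyclic subgroup of $\mathbb{R}$, completing the dichotomy. I do not expect any real obstacle here; the content of the corollary is essentially the one-dimensional shadow of Theorem \ref{posneg} together with the elementary structure of closed subgroups of $\mathbb{R}$.
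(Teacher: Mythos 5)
Your proposal is correct and follows essentially the same route as the paper, which simply notes that the corollary is the special case $n=1$ of Theorem \ref{posneg} combined with the classification of closed subgroups of $\mathbb{R}$. Your extra step showing $S=\overline{S}$ in the discrete case is a detail the paper leaves implicit, and your argument for it is sound.
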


We shall use the theorem in the following form.

\begin{corollary} \label{cor24}
Let $G$ be a compactly generated abelian locally compact topological group, e.g. an abelian Lie group whose group of connected components is finitely generated. Let $S$
be a subsemigroup of $G$. Suppose that every continuous homomorphism $f:G\to\mathbb{R}$ which has a positive value on $S$ also has a negative value on $S$. Then the
closure of $S$ is a subgroup of $G$.
\end{corollary}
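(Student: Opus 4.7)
The plan is to reduce Corollary \ref{cor24} to Theorem \ref{posneg} via Pontryagin's structure theorem, which writes $G\cong\mathbb{R}^n\times\mathbb{Z}^m\times K$ with $K$ a compact abelian group. Let $\pi\colon G\to G/K\cong\mathbb{R}^n\times\mathbb{Z}^m$ be the projection, and view $G/K$ as a closed subgroup of $\mathbb{R}^{n+m}$ via the standard inclusion $\mathbb{Z}^m\hookrightarrow\mathbb{R}^m$. First I would verify that $\pi(S)\subset\mathbb{R}^{n+m}$ satisfies the hypothesis of Theorem \ref{posneg}: any linear form $l\colon\mathbb{R}^{n+m}\to\mathbb{R}$ restricts to a continuous homomorphism on $G/K$ and hence yields a continuous homomorphism $l\circ\pi\colon G\to\mathbb{R}$, whose sign on $S$ matches the sign of $l$ on $\pi(S)$. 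Theorem \ref{posneg} then gives that the closure of $\pi(S)$ in $\mathbb{R}^{n+m}$ is a subgroup; since $G/K$ is closed in $\mathbb{R}^{n+m}$, this closure actually lies in $G/K$, so $\overline{\pi(S)}$ is a closed subgroup of $G/K$.

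It remains to lift this to the conclusion that $T:=\overline{S}$ is a subgroup of $G$. For $t\in T$, the group property of $\overline{\pi(S)}$ supplies a sequence $s_n\in S$ with $\pi(s_n+t)\to 0$. Writing $s_n+t=(h_n,k_n)$ in the product $G=(\mathbb{R}^n\times\mathbb{Z}^m)\times K$, we have $h_n\to 0$, and by compactness of $K$ a subsequence $k_{n_j}$ converges to some $k\in K$, so $s_{n_j}+t\to k$. Since $T$ is a closed semigroup, $k\in K\cap T$, and the parallel limit $s_{n_j}\to k-t$ shows $k-t\in T$ as well.

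The last ingredient is that any closed subsemigroup $H$ of a compact abelian group is in fact a subgroup: given $h\in H$, a convergent subsequence $n_jh\to h_0$ yields $(n_{j+1}-n_j)h\to 0$, and then $(n_{j+1}-n_j-1)h\to -h$, placing $-h$ in $H$. Applied to $H=K\cap T$, this gives $-k\in K\cap T\subset T$, and the semigroup property of $T$ then yields $-t=(k-t)+(-k)\in T$. Hence $-T\subset T$, and combined with closure under addition, $T$ is a subgroup of $G$.

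The principal difficulty I expect is this final lift: the projection $\pi$ collapses the compact fibre $K$, so the group structure on $\overline{\pi(S)}$ does not transfer automatically, and one needs the compact-group lemma on closed subsemigroups to reconstitute inverses inside $T$ itself.
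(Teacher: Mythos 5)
Your proof is correct and follows essentially the same route as the paper: reduce modulo the maximal compact subgroup $K$ via Pontryagin's structure theorem, apply Theorem~\ref{posneg} to $\pi(S)$ inside $\mathbb{R}^{n+m}$, and use that a closed subsemigroup of the compact group $K$ is a subgroup to reconstitute inverses in $\overline{S}$ --- you merely make explicit the lifting step (and the verification of the hypothesis of Theorem~\ref{posneg} for $\pi(S)$) that the paper leaves to the reader, replacing its appeal to properness of $\pi$ by the equivalent compactness of the fibre. The only cosmetic caveat is that $K$ need not be metrizable, so your sequential compactness and subsequence arguments should strictly be phrased with nets (or via the standard idempotent argument for closed subsemigroups of compact groups), which changes nothing essential.
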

\begin{proof}
The group $G$ has a unique maximal compact subgroup $K$ and $G/K$ is isomorphic to a direct sum of a vector group $\mathbb{R}^m$ and a lattice $\mathbb{Z}^l$, by
Pontryagin's structure theorem. Let $\pi:G\to G/K$ be the natural projection. We can think of $\mathbb{R}^m\oplus\mathbb{Z}^l$ as a subgroup of $\mathbb{R}^n$ with
$n=m+l$. The closure of $\pi (S)$ is a subgroup of $G/K$ by theorem \ref{posneg}. The corollary then follows from the following facts. The mapping $\pi$ is proper so the
image $\pi (\overline{S})$ of the closure $\overline{S}$ of $S$ is closed, hence a subgroup of $G/K$. And $\overline{S}\cap K$ is a closed subsemigroup of the compact
group $K$, hence a subgroup.
\end{proof}

A subsemigroup $S$ of an abelian topological group $G$ is said to be \textit{cocompact} if there is a compact subset $K$ of $G$ such that $G=S\cdot K$. The
subsemigroup $S$ of $G$ is called \textit{properly discontinuous} if it has no accumulation point in $G$. And $S$ is called \textit{crystallographic} if it both
cocompact and properly discontinuous.

\begin{corollary} \label{cor25}
Let $G$ be a compactly generated abelian locally compact topological groups and let $S$ be a subsemigroup of $G$.
\begin{enumerate}
\item Suppose $f(S)$ is cocompact in $\mathbb{R}$ for every non-zero continuous homomorphism $f:G\to\mathbb{R}$. Then $S$ and $\overline{S}$ are cocompact in $G$ and
$\overline{S}$ is a subgroup of $G$.

\item If $S$ is furthermore properly discontinuous then $S$ is a closed discrete crystallographic subgroup of $G$.

\item If $f(S)$ is dense in $\mathbb{R}$ for every non-zero continuous homomorphism $f:G\to\mathbb{R}$ then $G/K$ is a vector group isomorphic to $\mathbb{R}^m$ for some
$m$, $\overline{S}$ is a subgroup of $G$ and $\overline{S}\cdot K=G$, where $K$ is the maximal compact subgroup of $G$.
\end{enumerate}
\end{corollary}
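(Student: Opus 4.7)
My plan is to handle all three parts in parallel, starting from the observation that in both (1) and (3) the hypothesis of Corollary~\ref{cor24} is automatic. Indeed, in (1) the cocompactness of $f(S)$ in $\mathbb{R}$ means $f(S)$ is unbounded both above and below, while in (3) density is even stronger; in either case $f$ takes positive and negative values on $S$ for every non-zero continuous homomorphism $f$. Applying Corollary~\ref{cor24} already gives that $\overline{S}$ is a subgroup of $G$ in both cases.

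For the cocompactness claim in (1) I would pass to $G/K \cong \mathbb{R}^m \oplus \mathbb{Z}^l$ via the quotient map $\pi : G \to G/K$, which is closed since its fibers are $K$; hence $\pi(\overline{S}) = \overline{\pi(S)}$. Arguing by contradiction, if this closed subgroup $H \subseteq G/K$ were not cocompact, then $(G/K)/H$ would be a non-compact locally compact abelian group, which by Pontryagin's structure theorem admits a non-zero continuous homomorphism to $\mathbb{R}$ (either projecting onto a non-trivial vector factor, or onto a discrete $\mathbb{Z}$-factor embedded into $\mathbb{R}$). Pulling this back through $G \to G/K \to (G/K)/H$ produces a non-zero continuous homomorphism $f : G \to \mathbb{R}$ vanishing on $\overline{S}$, hence on $S$, contradicting that $f(S)$ is cocompact in $\mathbb{R}$. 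To upgrade cocompactness of $\overline{S}$ to that of $S$, I would pick a compact neighborhood $U$ of the identity in $\overline{S}$; density of $S$ in $\overline{S}$ then gives $\overline{S} \subseteq S + U$, so combining with a compact fundamental set $F$ for $G/\overline{S}$ yields $G = S + (U + F)$ with $U + F$ compact.

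Part (2) is then immediate: proper discontinuity forces $S$ to be discrete and closed, so $S = \overline{S}$ is already a subgroup by~(1); combined with cocompactness this is by definition the crystallographic property.

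For part (3) the subgroup conclusion is already in hand, so it remains to verify $G/K \cong \mathbb{R}^m$ and $\overline{S} \cdot K = G$. For the first, any non-trivial $\mathbb{Z}^l$-factor in the Pontryagin decomposition of $G/K$ would give a non-zero continuous homomorphism $G \to G/K \to \mathbb{Z} \hookrightarrow \mathbb{R}$ with image in $\mathbb{Z}$, contradicting density of $f(S)$. Once $G/K = \mathbb{R}^m$, every non-zero linear form on $\mathbb{R}^m$ composed with $\pi$ is a non-zero continuous homomorphism $G \to \mathbb{R}$, so the hypothesis of Corollary~\ref{SdenseR} applies to $\pi(S) \subseteq \mathbb{R}^m$, giving $\pi(S)$ dense in $\mathbb{R}^m$ and thus $\overline{S} \cdot K = \pi^{-1}(\mathbb{R}^m) = G$. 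The main obstacle I expect is the cocompactness duality in~(1)—both identifying the right non-zero homomorphism from the structure of $(G/K)/H$, and the somewhat fiddly step of transferring cocompactness from $\overline{S}$ down to $S$ itself.
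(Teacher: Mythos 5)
Your argument is correct, and its overall skeleton is the paper's: cocompactness (resp.\ density) of $f(S)$ forces positive and negative values, so corollary \ref{cor24} gives that $\overline{S}$ is a group; one then works modulo the maximal compact subgroup $K$ via Pontryagin's structure theorem, upgrades cocompactness of $\overline{S}$ to $S$ by a compact identity neighborhood, gets (2) for free from closedness of a properly discontinuous set, and gets (3) from corollary \ref{SdenseR} after ruling out a $\mathbb{Z}^l$-factor. The one place you genuinely deviate is the cocompactness step in (1): the paper views $G/K\cong\mathbb{R}^m\oplus\mathbb{Z}^l$ as a subgroup of $\mathbb{R}^n$, $n=m+l$, and uses the fact that a closed subgroup there is cocompact if and only if it spans $\mathbb{R}^n$ (a non-spanning subgroup is killed by a non-zero linear form, contradicting cocompactness of $f(S)$), whereas you apply the structure theorem a second time to the quotient $(G/K)/\pi(\overline{S})$ and extract from its non-compactness a non-zero continuous homomorphism to $\mathbb{R}$ vanishing on $S$. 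Both are instances of the same ``test with homomorphisms to $\mathbb{R}$'' duality; the paper's version is more concrete once $G/K$ is embedded in $\mathbb{R}^n$, while yours is more intrinsic (no embedding or spanning criterion needed), at the cost of having to note that the quotient is again compactly generated so the structure theorem applies. Two small steps you gloss over are harmless but worth a line each: passing from cocompactness of $\pi(\overline{S})$ in $G/K$ to cocompactness of $\overline{S}$ in $G$ uses properness of $\pi$ (preimages of compact sets are compact), and your claim that $\pi$ is closed should be justified by properness of the quotient map modulo a compact subgroup rather than merely by ``its fibers are $K$''; in (3), your explicit reason for excluding the $\mathbb{Z}^l$-factor (a homomorphism with image in $\mathbb{Z}$ cannot have dense image on $S$) supplies a detail the paper leaves implicit.
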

\begin{proof}
(1) For every non-zero continuous homomorphism $f:G\to\mathbb{R}$ the subsemigroup $f(S)$ of $\mathbb{R}$ is cocompact, hence has both a positive and a negative value.
So $\overline{S}$ is a subgroup of $G$. As in the proof of corollary \ref{cor24} we may assume that $G=\mathbb{R}^m\oplus\mathbb{Z}^l\subset\mathbb{R}^n$, with $n=m+l$,
by computing modulo the maximal compact subgroup $K$ of $G$. A subgroup of $G$ is then cocompact if and only if it spans $\mathbb{R}^n$ as a vector space. This shows
that $\overline{S}$ is cocompact in $G$ and hence also $S$, since if $G=\overline{S}\cdot L$ for some compact subset $L$ of $G$ then $G=S\cdot L_1$ for some compact
neighborhood $L_1$ of $L$.

(2) If $S$ is furthermore properly discontinuous then $S$ is closed in $G$, hence $S=\overline{S}$ is a closed crystallographic, in particular discrete, subgroup of $G$.

(3) Finally, if $f(S)$ is dense in $\mathbb{R}$ for every non-zero continuous homomorphism $f:G\to\mathbb{R}$ then $\overline{S}$ is a subgroup of $G$ by (1). Also, the
factor $\mathbb{Z}^l$ in Pontryagin's structure theorem $G/K \cong \mathbb{R}^m\oplus\mathbb{Z}^l$ does not occur, so $G/K \cong \mathbb{R}^m$, and $\overline{S}\cdot
K=G$ by corollary \ref{SdenseR}.
\end{proof}

\section{Proof of the main theorem}
We shall prove the following theorem which contains theorem \ref{main} as the special case $C=V$. Note that we always refer to the Euclidean topology, unless we
explicitly mention the Zariski topology.

\begin{theorem} \label{general}
Let $V$ be a finite dimensional real or complex vector space. Let $S$ be a subsemigroup of $GL(V)$ generated by a finite set of commuting elements. Suppose there is
an open cone $C$ in $V$ and an orbit $\mathcal{O}=S\cdot v_0$ of $S$ such that $\mathcal{O}\cap C$ is coarsely dense in $C$. Then
\begin{enumerate}
\item The closure $\overline{S}$ of $S$ in $GL(V)$ is an open subgroup of the Zariski closure of $S$. In particular, $\overline{S}$ has a finite number of connected
components since every real algebraic group has only a finite number of connected components, see \cite[V.24.6(c)(i)]{Bo}.

\item The map $\overline{S}\to V$, $g\mapsto gv_0$, is an analytic diffeomorphism of $\overline{S}$ onto an open cone in $V$. In particular
$\dim \overline{S}=\dim_{\mathbb{R}} V$.

\item The closure of the orbit $\mathcal{O}$ is a cone in $V$ which contains $C$.
\end{enumerate}
\end{theorem}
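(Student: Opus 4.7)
My plan is to deduce claim (1) from corollary \ref{cor24} applied to $S$ sitting inside its Zariski closure $G := \overline{S}^{\mathrm{Zar}} \subset GL(V)$, and then to extract (2) and (3) by combining a dimension count with a scaling argument based on coarse density. Since $S$ is commutative, so is $G$; by \cite[V.24.6(c)(i)]{Bo} the group $G$ has only finitely many Euclidean-connected components, so its identity component $G^0$ is a connected abelian Lie group and $G$ itself is a compactly generated abelian locally compact topological group, i.e.\ precisely the setting of corollary \ref{cor24}.

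The heart of the proof, and the main obstacle, is to verify the hypothesis of corollary \ref{cor24}: every continuous homomorphism $f : G \to \mathbb{R}$ that takes a positive value on $S$ also takes a negative value. Suppose for contradiction that $f \geq 0$ on $S$ and $f(s_0) > 0$; writing $S = \{g_1^{n_1}\cdots g_k^{n_k} : n_i \in \mathbb{N}\}$ for commuting generators $g_1,\ldots,g_k$, we get $f(g_i) \geq 0$ with at least one strict inequality. Continuous homomorphisms $G \to \mathbb{R}$ vanish on the maximal compact $K \subset G$, and by Pontryagin structure theory they are $\mathbb{R}$-linear combinations of the functions $g \mapsto \log|\chi(g)|$ for algebraic characters $\chi$ of $G^0$ arising from the joint generalized eigenspace decomposition of the commuting generators on $V$, together with linear functionals on the unipotent directions---these are precisely the quantities controlling the growth of $\|gv\|$ along the orbit $\mathcal{O}$. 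A one-sided positivity of $f$ on $S$ therefore forces $\mathcal{O}$ to stretch monotonically in a preferred direction, leaving unbounded gaps on the opposite side of the $\mathbb{R}_{>0}$-symmetric cone $C$ and contradicting coarse density. Lemma \ref{lemma37} on algebraic actions of algebraic groups is the tool that turns this heuristic into a rigorous contradiction.

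Granting the criterion, corollary \ref{cor24} yields that $\overline{S}$ is a subgroup of $G$. Coarse density of $\mathcal{O}$ in the open cone $C$ forces $\overline{S}\cdot v_0$ to have topological dimension $\dim_{\mathbb{R}} V$, so $\dim\overline{S} \geq \dim_{\mathbb{R}} V$; since $G$ is a commutative algebraic subgroup of $GL(V)$ acting faithfully, $\dim G = \dim\overline{S} = \dim_{\mathbb{R}} V$, so $\overline{S} \supseteq G^0$ is open in $G$---this proves (1). For (2), the orbit map $\pi:\overline{S}\to V$, $g\mapsto gv_0$, is analytic, $\overline{S}$-equivariant, and of maximal rank at the identity by the dimension equality; any nontrivial stabilizer of $v_0$ in $\overline{S}$ would lower the orbit dimension, so $\pi$ is injective and hence an analytic diffeomorphism onto an open set $\Omega \subset V$. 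For both the cone property in (2) and for (3): $\mathcal{O}$ is Euclidean-dense in $\Omega$ (as $\pi(S)$ with $S$ dense in $\overline{S}$), and given $x \in C$ and $\lambda > 0$, coarse density supplies $s_\lambda \in S$ with $\|s_\lambda v_0 - \lambda x\| < D$; a limit argument as $\lambda \to \infty$, using that the unboundedness of $\mathcal{O}$ in $C$ forces the scalar dilations $\mathbb{R}_{>0}\cdot \mathrm{Id}_V$ to lie inside $\overline{S}$, makes $\Omega$ a cone and places $\lambda^{-1}s_\lambda v_0 \in \Omega$ within $D/\lambda$ of $x$, yielding $x \in \overline{\Omega} = \overline{\mathcal{O}}$ and establishing (3).
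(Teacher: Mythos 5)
There are genuine gaps, and they sit exactly at the heart of the theorem. First, your verification of the hypothesis of corollary \ref{cor24} is a heuristic, not an argument: the claim that one-sided positivity of $f$ on $S$ ``forces $\mathcal{O}$ to stretch monotonically, leaving unbounded gaps'' does not survive the case where $f$ is constant on rays (the restriction of $f$ to the homotheties vanishes), and lemma \ref{lemma37} cannot ``turn the heuristic into a rigorous contradiction'' --- it only says that the Zariski closure acts with open orbit and trivial stabilizer. What actually makes the criterion work in the paper is the growth dichotomy for finitely generated subsemigroups of $\mathbb{R}$ (lemma \ref{polS}: dense or polynomially many points in $[-n,n]$), fed through corollary \ref{addense} and lemma \ref{SdenseRn} to settle the diagonalizable case (lemma \ref{Sdiag}); only after one knows, via the semisimple quotient, that $\mathbb{R}^*_{+}\cdot\mathbbm{1}\subset\overline{\pi_s(S)}$ can one run the two-case analysis (ray-constant $f$ versus $f$ nontrivial on homotheties) showing $f(S)$ is dense, which is also why the paper invokes corollary \ref{cor25}(3) rather than \ref{cor24}. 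Your proposal never engages this exponential-versus-polynomial counting, which is the mechanism that converts coarse density into density and is where finite generation enters irreplaceably. Relatedly, your later assertion that ``the unboundedness of $\mathcal{O}$ in $C$ forces the scalar dilations $\mathbb{R}_{>0}\cdot\mathrm{Id}_V$ to lie inside $\overline{S}$'' is precisely one of the main things to be proved, and it is not a consequence of unboundedness; it comes out of lemma \ref{Sdiag}.

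Second, the dimension count is unsound at both ends. Coarse density of $\mathcal{O}\cap C$ in $C$ does \emph{not} force $\overline{S}\cdot v_0$ (or its closure) to have topological dimension $\dim_{\mathbb{R}}V$: the set $\mathbb{Z}^n$ is coarsely dense in $\mathbb{R}^n$ and zero-dimensional, so this step assumes essentially the conclusion of the theorem. And $\dim G=\dim_{\mathbb{R}}V$ does not follow from $G$ being a commutative subgroup of $GL(V)$ ``acting faithfully'' (a one-parameter diagonal subgroup of $GL_2(\mathbb{R})$ is a counterexample); the paper gets it by first proving the orbit is Zariski dense (lemma \ref{lemma36}, a leading-homogeneous-term argument using coarse density in the cone) and then applying lemma \ref{lemma37}. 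Your final inference, that a closed subgroup of $G$ of full dimension contains $G^0$, would be fine once the dimension equality is legitimately established, and the concluding scaling argument for (3) is in the spirit of the paper's, but as written the proposal presupposes the two hardest ingredients (density of $f(S)$ for all homomorphisms $f$, and the presence of the homotheties in $\overline{S}$) rather than proving them.
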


We will give more information on the cones $\overline{S}\cdot v_0$ and $\overline{S\cdot v_0}$ below; see theorem \ref{th41}.

The proof of theorem \ref{general}  is given in steps, proceeding from special cases to more general cases. We start with

\begin{lemma} \label{polS}
Let $S$ be a finitely generated subsemigroup of $\mathbb{R}$. Then either $S$ is dense in $\mathbb{R}$ or every bounded interval in $\mathbb{R}$ contains only a finite
number of elements of $S$. In the latter case, the number of elements of $S\cap [-n,n]$ is bounded by a polynomial in $n$.
\end{lemma}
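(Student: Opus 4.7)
The plan is to split on the signs of the generators and use Corollary \ref{denscycl} in the ``mixed sign'' case while using a crude lattice-point count in the ``one-sided'' case. Write $S$ as the set of all sums $n_1 s_1 + \cdots + n_k s_k$ with $n_i \in \mathbb{Z}_{\geq 0}$, not all zero, where $s_1,\ldots,s_k$ are the given finite set of generators. I may assume all $s_i\neq 0$, since a zero generator contributes nothing new.

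First I would handle the case where $S$ contains both a positive and a negative real number. This is exactly the hypothesis of Corollary \ref{denscycl}, so $S$ is either dense in $\mathbb{R}$ (and the first alternative of the lemma holds, with nothing more to prove) or $S$ is a cyclic subgroup $t\mathbb{Z}$ of $\mathbb{R}$ for some $t>0$. In the cyclic case every bounded interval clearly meets $S$ in finitely many points and $|S\cap[-n,n]|\leq 2n/t+1$, which is a (linear) polynomial in $n$.

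Next I would treat the case where all generators share a common sign; by replacing $S$ with $-S$ if necessary I may assume $s_1,\ldots,s_k>0$, so $S\subset(0,\infty)$ and in particular $S$ is not dense in $\mathbb{R}$. Set $m=\min_i s_i>0$. Any $s=n_1s_1+\cdots+n_ks_k\in S\cap[-n,n]$ satisfies $s\leq n$, hence
\[
m(n_1+\cdots+n_k)\;\leq\;n_1s_1+\cdots+n_ks_k\;\leq\;n,
\]
so $\sum_i n_i\leq n/m$. The number of $k$-tuples $(n_1,\ldots,n_k)\in\mathbb{Z}_{\geq 0}^k$ satisfying this bound is at most $(n/m+1)^k$, and the map sending a tuple to the corresponding element of $S$ is surjective onto $S\cap[-n,n]$. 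Consequently $|S\cap[-n,n]|=O(n^k)$, which is the required polynomial bound. The same counting applied to an arbitrary bounded interval $[a,b]$ shows that $S\cap[a,b]$ is finite, since every element of $S\cap[a,b]$ comes from a tuple with $\sum n_i\leq b/m$.

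There is no real obstacle here; the only mild subtlety is that several tuples $(n_1,\ldots,n_k)$ may represent the same element of $S$ when the ratios $s_i/s_j$ are rational, but this can only decrease $|S\cap[-n,n]|$ and so does not affect the polynomial upper bound. Combining the two cases gives the dichotomy claimed in the lemma.
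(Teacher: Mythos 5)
Your proposal is correct and follows essentially the same route as the paper: the mixed-sign case is reduced to Corollary \ref{denscycl}, and in the one-signed case one counts tuples of nonnegative integer coefficients with bounded sum (the paper phrases this as at most $n^t$ elements in $[0,ns_0)$ with $s_0$ a minimal positive generator). No gaps worth noting.
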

\begin{proof}
If $S$ contains both a positive and a negative number then the lemma follows from corollary \ref{denscycl}. So suppose $S$ is finitely generated and contained in
$[0,\infty)$. Let $s_0$ be a minimal positive generator of $S$. If $S$ is generated by $t$ elements, there are at most $n^t$ elements of $S$ in $[0,ns_0)$.
\end{proof}

This implies the case $\dim_{\mathbb{R}} V=1$ of theorem \ref{general}.

\begin{corollary} \label{addense}
Let $\mathbb{R}^*_{+}$ be the multiplicative group of positive real numbers. Endow it with the metric induced from the Euclidean metric of $\mathbb{R}$. Then every
coarsely dense finitely generated subsemigroup of $\mathbb{R}^*_{+}$ is dense.
\end{corollary}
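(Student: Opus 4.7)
The plan is to transport the problem to the additive group $\mathbb{R}$ via the logarithm and derive a contradiction by pitting an exponential lower bound against the polynomial upper bound furnished by Lemma \ref{polS}. Concretely, set $T := \log S$. Since $\log:\mathbb{R}^{*}_{+}\to\mathbb{R}$ is a topological group isomorphism carrying multiplicative generators to additive generators, $T$ is a finitely generated subsemigroup of the additive group $\mathbb{R}$, and it suffices to show that $T$ is dense in $\mathbb{R}$; indeed $\log$ is a homeomorphism for the Euclidean topologies involved.

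Assume for contradiction that $T$ is not dense. By Lemma \ref{polS} there exists a polynomial $P$ such that $|T\cap [-n,n]|\leq P(n)$ for every $n\geq 0$. To produce a matching lower bound, fix $D>0$ witnessing coarse density of $S$ in $\mathbb{R}^{*}_{+}$ for the Euclidean metric. For each positive integer $k$ the point $2kD\in\mathbb{R}^{*}_{+}$ admits some $s_k\in S$ with $|s_k-2kD|<D$, so $s_k\in\bigl((2k-1)D,(2k+1)D\bigr)$; these intervals are pairwise disjoint, hence the $s_k$ are pairwise distinct. This gives
\[
|S\cap (D,M)|\geq \lfloor (M-D)/(2D)\rfloor
\]
for every $M>D$, a bound that is linear in $M$.

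Taking logarithms, $|T\cap (\log D,\log M)|$ grows at least linearly in $M$, i.e.\ exponentially in $\log M$. For $M$ so large that $\log M\geq -\log D$ one has $(\log D,\log M)\subseteq [-\log M,\log M]$, and therefore
\[
|T\cap (\log D,\log M)|\leq |T\cap [-\log M,\log M]|\leq P(\log M),
\]
which is only polynomial in $\log M$. For $M$ sufficiently large this contradicts the exponential lower bound, so $T$ must be dense and the corollary follows. I do not foresee a serious obstacle; the only subtlety worth flagging is that $T$ may contain arbitrarily negative elements (when $S$ has elements close to $0$), which is why Lemma \ref{polS} should be invoked in its symmetric two-sided form on $[-\log M,\log M]$ rather than on a half-line such as $[0,\log M]$.
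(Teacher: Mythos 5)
Your proposal is correct and follows essentially the same route as the paper: coarse density gives a count of elements of $S$ in $[1,M]$ (respectively $(D,M)$) growing linearly in $M$, hence after applying $\log$ the finitely generated semigroup $\log S$ has exponentially many elements in intervals of length $\log M$, contradicting the polynomial bound of Lemma \ref{polS} unless $\log S$ is dense. Your added care about negative elements of $\log S$ is harmless but not needed beyond what the two-sided statement of Lemma \ref{polS} already provides.
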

\begin{proof}
Let $S$ be a finitely generated coarsely dense subsemigroup of $\mathbb{R}^*_{+}$. Then the interval $[1,x]$, $x>1$, contains at least $c\cdot x$ elements of $S$ for
some positive constant $c$. Thus the interval $[0,\log x]$ contains at least $c\cdot x$  elements of the finitely generated subsemigroup $\log S$ of the additive
group $\mathbb{R}$, hence is dense in $\mathbb{R}$, by the preceding lemma.
\end{proof}

Let $\mathbb{R}^n_{+}=\{ (x_1,\ldots,x_n)\, ;\, x_i>0\}$. Then $\mathbb{R}^n_{+}=(\mathbb{R}^*_{+})^n$ is a group under componentwise multiplication. By a
\textit{cone} in $\mathbb{R}^n_{+}$ we mean a cone in $\mathbb{R}^n$ that is contained in $\mathbb{R}^n_{+}$. In geometric terms, a subset $C$ of $\mathbb{R}^n_{+}$
is a cone if and only if for every non-zero $x\in C$ the open ray $\mathbb{R}^*_{+}\cdot x$ is contained in $C$. We endow $\mathbb{R}^n_{+}$ with the metric induced
from the Euclidean metric on $\mathbb{R}^n$.

\begin{lemma} \label{SdenseRn}
Let $S$ be a finitely generated subsemigroup of $\mathbb{R}^n_{+}$ with the property that for some $a\in \mathbb{R}^n_{+}$ the orbit $S\cdot a=\{ s\cdot a\, ;\, s\in
S\}$ contains a coarsely dense subset of some open cone in $\mathbb{R}^n_{+}$. Then $S$ is dense in $\mathbb{R}^n_{+}$.
\end{lemma}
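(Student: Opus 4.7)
The plan is to pass to the additive side via the componentwise logarithm $\phi\colon\mathbb{R}^n_{+}\to\mathbb{R}^n$, $\phi(x)=(\log x_1,\ldots,\log x_n)$, a topological group isomorphism. Setting $\tilde S:=\phi(S)$, a finitely generated subsemigroup of the additive $\mathbb{R}^n$, the conclusion becomes that $\tilde S$ is dense in $\mathbb{R}^n$. By corollary~\ref{SdenseR} it suffices to prove that $l(\tilde S)$ is dense in $\mathbb{R}$ for every non-zero linear form $l\colon\mathbb{R}^n\to\mathbb{R}$. I argue by contradiction: fix such an $l$ for which $l(\tilde S)$ is not dense. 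Then by lemma~\ref{polS}, $l(\tilde S)$ is locally finite with polynomial growth bound $|l(\tilde S)\cap[-N,N]|=O(N^k)$ for some $k$, and the translate $l(\phi(T))\subset l(\tilde S)+l(\phi(a))$ inherits both this bound and any one-sidedness.

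The main obstacle is that $\phi$ is not Lipschitz on $\mathbb{R}^n_{+}$: the partial derivatives $\partial(l\circ\phi)/\partial x_i=l_i/x_i$ blow up near the boundary, so the coarse density of $T$ in $C$ in the Euclidean metric does not transfer directly to a useful density of $\phi(T)$ in $\phi(C)$. I would overcome this by restricting to the far part of a well-chosen subcone. Pick a connected, relatively compact open subcone $C_0\subset C$ with base $U_0\subset S^{n-1}_{+}$ satisfying $\min_{u\in U_0,\,i} u_i\ge\delta>0$, chosen small enough that $l(\phi(U_0))=[A,B]$ is a non-degenerate interval (possible since a Lagrange multiplier computation shows $l\circ\phi|_{S^{n-1}_{+}}$ has at most one critical point, hence is non-constant on a neighborhood of any non-critical point of $U$). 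Then for $\|x\|\ge R_0$ with $R_0$ sufficiently large, one has $x_i\ge\delta R_0$, the ball $B_D(x)$ lies in $C$, and $l\circ\phi$ is Lipschitz on $B_D(x)$ with constant $K/R_0$ for some $K$.

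Combining with coarse density, for every $x\in C_0$ with $R_0\le\|x\|\le 2R_0$ there exists $t\in T$ with $|t-x|<D$, hence $|l(\phi(t))-l(\phi(x))|\le KD/R_0$. Writing $x=ru$ and setting $\sigma:=\sum_i l_i$, one has $l(\phi(x))=\sigma\log r+l(\phi(u))$, so as $(r,u)$ varies the values sweep an interval $I(R_0)$ of length at least $\max(B-A,\,|\sigma|\log 2)>0$ lying within $O(1)$ of $\sigma\log R_0$. Consequently $l(\phi(T))$ is $(KD/R_0)$-dense in $I(R_0)$, so the number of points of $l(\phi(T))$ within $KD/R_0$ of $I(R_0)$ is $\gtrsim R_0$.

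The contradiction now follows by comparing with the polynomial bound. When $\sigma=0$, $I(R_0)=[A,B]$ is a fixed bounded interval so the count is a finite constant. When $\sigma\ne 0$ and the one-sidedness of $l(\tilde S)$ (if any) is compatible with the sign of $\sigma$, the interval sits at height $\asymp|\sigma|\log R_0$ and the polynomial bound yields $O((\log R_0)^k)$. When the one-sidedness is incompatible with $\sigma$, the interval $I(R_0)$ eventually lies in the forbidden half-line and the count is $0$ for $R_0$ large. In every case the linear lower bound $\gtrsim R_0$ is violated for $R_0$ sufficiently large, producing the desired contradiction and proving density of $l(\tilde S)$, hence of $\tilde S$ in $\mathbb{R}^n$ and of $S$ in $\mathbb{R}^n_{+}$.
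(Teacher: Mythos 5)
Your argument is correct and is essentially the paper's: you pass to logarithms, reduce via corollary \ref{SdenseR} to density of $l(\log S)$ for every non-zero linear form $l$, split according to whether $\sum_i l_i$ vanishes, and conclude from the growth dichotomy of lemma \ref{polS}. The only difference is presentational: where the paper treats $\sum_i \alpha_i\neq 0$ by homogeneity (the character is distance-decreasing far out, so the image of the orbit is coarsely dense and corollary \ref{addense} applies) and $\sum_i \alpha_i=0$ by constancy on rays plus density of directions, you inline both cases into a single quantitative count of character values over a dyadic annulus against the polynomial growth bound --- the same estimates, made explicit.
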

\begin{proof}
We will use corollary \ref{SdenseR} in the following form. If $\alpha (S)$ has dense image in $\mathbb{R}^*_{+}$ for every non-trivial continuous homomorphism $\alpha
: \mathbb{R}^n_{+}\to \mathbb{R}^*_{+}$ then $S$ is dense in $\mathbb{R}^n_{+}$. This follows from corollary \ref{SdenseR} by passing to exponentials since continuous
homomorphisms $\alpha : \mathbb{R}^n_{+}\to \mathbb{R}^*_{+}$ correspond bijectively to linear maps $l: \mathbb{R}^n\to \mathbb{R}$ under $l\mapsto \exp \circ l\circ
\log$. We will show that $\alpha (S)$ has dense image in $\mathbb{R}^*_{+}$ for every  $\alpha : \mathbb{R}^n_{+}\to \mathbb{R}^*_{+}$ with $\alpha
(x_1,\ldots,x_n)=x_1^{\alpha_1} \ldots x_n^{\alpha_n}$ for $(\alpha_1,\ldots,\alpha_n)\neq 0\in\mathbb{R}^n$. Let $C$ be an open cone in $\mathbb{R}^n_{+}$ which
contains a coarsely dense subset of the orbit $S\cdot a$. There are two cases to consider:

(1) $\sum \alpha_i \neq 0$. We may assume that $\sum \alpha_i >0$, by passing to $\alpha^{-1}$, if necessary, and that $\sum \alpha_i <1$, by passing to a positive
multiple $r\cdot (\alpha_1,\ldots,\alpha_n)$, $r>0$, of $(\alpha_1,\ldots,\alpha_n)$, which changes the image of $\alpha$ by the automorphism $x\mapsto x^r$ of
$\mathbb{R}^*_{+}$. Then $\alpha$ is positively homogeneous of degree $\sum \alpha_i$ and $\frac{\partial \alpha}{\partial x_i}=\alpha_i \cdot x_i^{-1}\cdot \alpha$
is positively homogeneous of degree $\sum \alpha_i -1 <0$ for $i=1,\ldots,n$. Hence $\alpha$ grows monotonously to $+\infty$ on every ray $R:=\mathbb{R}^*_{+} \cdot
x$, $x\in \mathbb{R}^n_{+}$, and there is an open cone $C'$ in $\mathbb{R}^n_{+}$ containing our ray $R$ such that $\alpha$ is distance decreasing on the set of
points of $C'$ of sufficiently large Euclidean norm. If $C'$ is contained in the cone $C$ it follows that $\alpha (S\cdot a)=\alpha (S)\cdot \alpha(a)$ is coarsely
dense in $\mathbb{R}^*_{+}$, hence also $\alpha (S)$. Thus $\alpha (S)$ is dense in $\mathbb{R}^*_{+}$, by corollary \ref{addense}. The other case is

(2) $\sum \alpha_i =0$. Let $S^1=\{ x\in\mathbb{R}^n \, ;\, \| x\| =1\}$ be the norm $1$ sphere in $\mathbb{R}^n$ and $S^1_{+}=S^1\cap \mathbb{R}^n_{+}$. The function
$\alpha$ is constant on every ray in $\mathbb{R}^n_{+}$. By our hypothesis about coarse density the set of points of $S^1_{+}$ for which the corresponding ray
$\mathbb{R}^*_{+} \cdot x$ intersects $S\cdot a$ is dense in $S^1_{+} \cap C$. It follows that the closure of $\alpha (S\cdot a)=\alpha (S)\cdot \alpha(a)$ contains
the open set $\alpha (C)$. Hence $\alpha(S)$ is dense in $\mathbb{R}^*_{+}$ by applying lemma \ref{polS} to the finitely generated subsemigroup $\log \alpha (S)$ of
$\mathbb{R}$ or to $-\log \alpha (S)$.
\end{proof}

The next case we consider is that $S$ is diagonalizable over the complex numbers. Then $V$ decomposes into a direct sum of $S$-invariant one- or two-dimensional real
vector subspaces $V_1,\ldots,V_r,V_{r+1},\ldots,V_d$ with the following properties. Each $V_i$, $i\leq r$, is one-dimensional and $S$ acts by multiplication by
scalars. Each $V_i$ for $i>r$ is two-dimensional and can be endowed with the structure of a one-dimensional complex vector space such that $S$ acts by multiplication
by complex scalars. We thus have an embedding of $S$ into the following group $G$. Let $G$ be the group of diagonal $d\times d$-matrices whose first $r$ diagonal
entries are real and non-zero and whose last $d-r$ entries are complex non-zero. Let $\delta_i :G\to\mathbb{R}^*$, resp. $\mathbb{C}^*$, be the projection to $i$-th
diagonal component. So $S$ acts in the following way on $V=V_1 \oplus\ldots\oplus V_d$; $s(v_1,\ldots,v_d)=(\delta_1 (s)v_1,\ldots,\delta_d (s) v_d )$ for $s\in S$.

\begin{lemma} \label{Sdiag}
Suppose $S$ is diagonalizable over the complex numbers. Suppose furthermore that there is an orbit of $S$ which contains a co\-a\-rse\-ly dense subset of some open
cone. Then the closure of $S$ is a subgroup of $G$ and contains the connected component $G^0$ of the identity of $G$.
\end{lemma}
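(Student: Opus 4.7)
The plan is to prove the lemma in two stages. Throughout, I use the decomposition $G \cong (\mathbb{R}_+^*)^d \times K$, where $K = \{\pm 1\}^r \times (S^1)^{d-r}$ is the maximal compact subgroup, together with the modulus map $|\cdot| : G \to (\mathbb{R}_+^*)^d$ and the componentwise norm $p : V \to \mathbb{R}_{\geq 0}^d$, $p(v) = (\|v^{(1)}\|, \ldots, \|v^{(d)}\|)$. A preliminary reduction is that without loss of generality every coordinate $v_0^{(i)}$ is nonzero: otherwise $\mathcal{O} = S \cdot v_0$ lies in a coordinate hyperplane $W$, but the open cone $C$ is not contained in $W$ (being open in $V$), so scaling any $c_0 \in C \setminus W$ by $\lambda > 0$ yields points of $C$ at unbounded distance from $\mathcal{O} \subset W$, contradicting coarse density.

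Stage 1, that $\overline{S}$ is a subgroup. I will check the hypothesis of Corollary \ref{cor24} by first showing $|S|$ is dense in $(\mathbb{R}_+^*)^d$ via Lemma \ref{SdenseRn}. The map $p$ is Lipschitz for a product norm on $V$, and its restriction to $V^\times = \prod_i (V_i \setminus \{0\})$ is a submersion. Hence $p(C \cap V^\times)$ is a nonempty open cone in $\mathbb{R}_+^d$, and $p(\mathcal{O} \cap C) \subset |S| \cdot p(v_0)$ is a coarsely dense subset of it. Lemma \ref{SdenseRn} applies and yields density of $|S|$ in $(\mathbb{R}_+^*)^d$. Since every continuous homomorphism $f : G \to \mathbb{R}$ factors through $|\cdot|$ as a linear form in logarithmic coordinates, $f(S)$ is dense in $\mathbb{R}$ for every nonzero $f$, and in particular takes both signs. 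Corollary \ref{cor24} then yields that $\overline{S}$ is a closed subgroup of $G$.

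Stage 2, that $\overline{S} \supset G^0$. I now use the cone hypothesis directly through a ratio argument. Fix $u_1, u_2 \in C$. For each large $\lambda > 0$, coarse density produces $s_j^{(\lambda)} \in S$ with $\|s_j^{(\lambda)} v_0 - \lambda u_j\| \leq D$ for $j = 1, 2$. Componentwise this reads $\delta_i(s_j^{(\lambda)}) = \lambda u_j^{(i)}/v_0^{(i)} + O(1)$, so the ratio $s_1^{(\lambda)} (s_2^{(\lambda)})^{-1}$, which lies in $\overline{S}$ by Stage 1, converges in $G$ as $\lambda \to \infty$ to the element $(u_1^{(i)}/u_2^{(i)})_i$. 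Letting $u_1, u_2$ range over $C$, we obtain $C' \cdot (C')^{-1} \subset \overline{S}$, where $C' := \{g \in G : g v_0 \in C\}$ is a nonempty open subset of $G$ (the preimage of $C$ under the orbit diffeomorphism $G \to V^\times$, $g \mapsto g v_0$). Then $C' \cdot (C')^{-1}$ is an open neighborhood of the identity, so $\overline{S}$ is an open subgroup of $G$, and any open subgroup of a Lie group contains the identity component.

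The main obstacle is Stage 2: density of $|S|$ alone is not enough to pin down the $K^0$-directions of $\overline{S}$, since the modulus and phase projections of a finitely generated abelian semigroup can each be dense while their joint image is not. The stronger hypothesis---coarse density of an orbit inside an \emph{open} cone of $V$---is exactly what the ratio trick exploits: the cancellation of the linear growth in $\lambda$ makes the ratio converge in $G$, and the openness of $C$ sweeps out an entire open neighborhood of the identity at once, delivering all of $G^0$.
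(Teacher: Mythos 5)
Your proof is correct, and it splits the same way the paper's does into two stages, but the second stage takes a genuinely different route. Stage 1 coincides with the paper's argument: both push the orbit forward under the componentwise absolute value, note the image of the cone is an open cone in $(\mathbb{R}^*_{+})^d$ and the image of the coarsely dense set stays coarsely dense because the map is distance non-expanding, invoke Lemma \ref{SdenseRn} to get $|S|$ dense in $(\mathbb{R}^*_{+})^d$, and then conclude that $\overline{S}$ is a group (the paper argues directly with the compact kernel of $|\cdot|$, you route it through Corollary \ref{cor24}; these amount to the same thing). In Stage 2 the paper uses properness of $|\cdot|$ to find $g\in\overline{S}$ with $|g|=M_r$, $r<1$, takes powers $g^n$ whose compact part returns near the identity, and contracts: a point of $S\cdot v_0$ within $D$ of $g^{-n}x$ is carried by $g^n$ to within $r^nD$ of $x$, so $\overline{S}\cdot v_0$ is dense in $C$ and hence $\overline{S}$ contains an open subset of $G$. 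Your ratio trick instead tracks two rays $\lambda u_1,\lambda u_2$ deep in the cone and divides, so the bounded error $D$ is annihilated by the linear growth in $\lambda$; this puts $g_1g_2^{-1}\in\overline{S}$ for all $g_1,g_2$ with $g_jv_0\in C$, i.e.\ the open identity neighborhood $C'\cdot (C')^{-1}$ sits inside $\overline{S}$ directly. Your version avoids both the closed-image/properness step and the recurrence argument in the compact factor, so it is arguably more elementary; the paper's version has the side benefit of establishing that $\overline{S}\cdot v_0$ is dense in $C$, a contraction argument that is reused in the proof of theorem \ref{general}. One cosmetic point: for the limit of $s_1^{(\lambda)}(s_2^{(\lambda)})^{-1}$ to exist in $G$ you need every component $u_j^{(i)}$ to be nonzero, so $u_1,u_2$ should range over $C\cap V^{\neq 0}$ rather than all of $C$; since this is exactly the set parametrized by $C'$, the stated inclusion $C'\cdot (C')^{-1}\subset\overline{S}$ is unaffected.
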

\begin{proof}
Let $v_0$ be a point of $V$ and let $C$ be an open cone in $V$ such that $S\cdot v_0 \cap C$ is coarsely dense in $C$. Let $V^{\neq 0}$ be the set of vectors in
$V=V_1 \oplus\ldots\oplus V_{d}$ all of whose components are non-zero. The group $G$ acts simply transitively on $V^{\neq 0}$. Clearly, $v_0\in V^{\neq 0}$. We have
an absolute value map $\text{abs} :V\to [0,\infty )^{d}$, $v=(v_1,\ldots,v_{d})\mapsto (|v_1|,\ldots,|v_{d}|)$ if we choose isomorphisms $V_i\simeq\mathbb{R}$ for
$i\leq r$ and $V_i\simeq\mathbb{C}$ for $i>r$. We also have an absolute value map on $G$, $|\cdot |:G\to (\mathbb{R}^*_{+})^n$. The pair $(|\cdot |,\text{abs})$ is
equivariant, that is $\text{abs}(g\cdot v)=|g| \text{abs}(v)$ for $g\in G$ and $v\in V$. The image of $C$ under the map $\text{abs}$ is an open cone in
$(\mathbb{R}^*_{+})^n$. The image of $S\cdot v_0$ under the map $\text{abs}$ is the orbit of $\text{abs} (v_0)$ under $|S|$ and contains a coarsely dense subset of
$\text{abs}(C)$, since $\text{abs}$ is distance non-expanding. Note that the notion of coarse density is independent of the norm on $V$ we choose, since any two norms
on $V$ are Lipschitz equivalent. It follows from lemma \ref{SdenseRn} that $|S|$ is dense in $(\mathbb{R}^*_{+})^d$. This implies that the closure $\overline{S}$ of
$S$ is a group, since the kernel $K$ of the homomorphism $|\cdot |:G\to (\mathbb{R}^*_{+})^d$ is compact and $\overline{S}\cap K$ is a closed subsemigroup of $K$ and
hence a subgroup.

We now claim that the closure $\overline{S}$ of $S$ contains an open subset of $G$ and hence is an open subgroup of $G$ which finishes the proof. We will actually show
that $\overline{S}\cdot v_0$ is dense in $C$ which implies our claim, since $G\to V^{\neq 0}$, $g\mapsto gv_0$, is a diffeomorphism. The group $G$ is the direct product
of its subgroup $(\mathbb{R}^*_{+})^d$ and its maximal compact subgroup $K$. We denote the two components of an element $g\in G$ by $|g|$ and $\arg (g)$, respectively,
since this decomposition is just the polar decomposition of the diagonal entries in $\mathbb{C}^*$ and $\mathbb{R}^*$, respectively. The image of $\overline{S}$ under
$|\cdot |$ is closed, since  $|\cdot |$ is a proper map, hence $|\overline{S}|=(\mathbb{R}^*_{+})^d$. So for every positive real number $r$ there is an element $g\in
\overline{S}$ such that $|g|$ is the homothety $M_r$ with $M_r(x)=r \cdot x$. Taking an appropriate power of $g$ we obtain elements $g^n$ with $\arg (g^n)$ arbitrarily
close to the identity. It follows that for every point $x\in C$ there are arbitrarily large powers $g^n$ of $g$ such that $g^{-n}x\in C$. Then there is a point $y\in
S\cdot v_0$ at distance at most $D$ from $g^{-n}x$, where $D$ is the constant in the definition of coarse density. Thus the point $g^ny\in \overline{S}\cdot v_0$ is at
distance $r^{n}\cdot D$ from $x$, since $\| g^ny-x \|=\| g^n(y-g^{-n}x)\|= r^n \| \arg (g^n) (y-g^{-n}x)\|\leq r^{n}\cdot D$. This shows that $\overline{S}\cdot v_0$ is
dense in $C$ if we take $r<1$.
\end{proof}

We now come to the proof of the general case of theorem \ref{general}.

\begin{lemma} \label{lemma36}
Under the hypotheses of theorem \ref{general} the orbit $\mathcal{O}:=S\cdot v_0$ is Zariski-dense in $V$.
\end{lemma}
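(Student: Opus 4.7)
The plan is to argue by contradiction using the asymptotic growth of a polynomial along a ray inside $C$. Suppose $\mathcal{O}$ is not Zariski-dense in $V$. Then there is a non-zero polynomial $p$ on $V$ (real in the real case, holomorphic in the complex case) of some degree $d\geq 1$ that vanishes identically on $\mathcal{O}$. Decompose $p = p_0 + p_1 + \cdots + p_d$ into homogeneous components with $p_d\not\equiv 0$. Since $\{v\in V:p_d(v)\neq 0\}$ is Zariski-open and non-empty, it meets the Euclidean-open set $C$, so one can fix $v\in C$ with $p_d(v)\neq 0$; because $C$ is a cone, the whole ray $\mathbb{R}_+^*\cdot v$ lies in $C$.

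The heart of the proof is a pair of bounds on $|p(tv)|$ that become incompatible for large $t$. From below, $p(tv)=p_d(v)\,t^d + O(t^{d-1})$ as $t\to\infty$, hence $|p(tv)|\geq \tfrac12|p_d(v)|\,t^d$ for $t$ sufficiently large. From above, the coarse density of $\mathcal{O}\cap C$ in $C$ provides, for each $t>0$, a point $y_t\in\mathcal{O}$ with $\|y_t - tv\|<D$, where $D$ is the coarse density constant. Since $p(y_t)=0$, the mean value inequality applied on the segment from $tv$ to $y_t$ gives
\[
|p(tv)| = |p(tv)-p(y_t)| \;\leq\; D\cdot \sup_{z\in[tv,y_t]}\|\nabla p(z)\| \;\leq\; C\,D\,t^{d-1},
\]
where $C$ depends only on $p$ and $\|v\|$, using that each component of $\nabla p$ is a polynomial of degree at most $d-1$ and $\|z\|\leq t\|v\|+D$ on the segment. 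For $t$ large enough these two bounds contradict each other, proving that no such $p$ can exist.

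The argument is essentially one-variable polynomial asymptotics, and the only mild obstacle is securing a ray in $C$ on which $p$ is genuinely of degree $d$; this is immediate from the openness of $C$ combined with the homogeneity of $p_d$. It is worth noting that this lemma uses neither the semigroup structure nor finite generation of $S$, only that the orbit has a coarsely dense trace in some non-empty Euclidean-open cone. The commutativity and finite generation will be needed in later steps to exploit the resulting Zariski-density.
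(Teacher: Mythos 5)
Your proof is correct and follows essentially the paper's own approach: assume a nonzero polynomial vanishes on $\mathcal{O}$, extract its top homogeneous component, and contradict the coarse density of $\mathcal{O}\cap C$ via the polynomial's growth along rays of $C$ on which that component is nonzero. The only differences are cosmetic: the paper thickens the ray to a compact piece of $C$ where the top component has no zero (so that for large scale the $D$-balls lie in a zero-free truncated cone), while you stay on a single ray and use a mean-value/gradient bound, an equivalent quantitative variant; do note that in the complex case one should allow $p$ to be a real polynomial on the underlying real vector space (the Zariski topology used later treats the closure of $S$ as a real algebraic group), which your argument covers verbatim.
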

\begin{proof}
The claim is that the zero polynomial is the only polynomial function on $V$ which vanishes on $\mathcal{O}$. Thus let $f$ be a non-zero polynomial function which
vanishes on $\mathcal{O}$. Let $f_n$ be the homogeneous component of $f$ of highest degree. Then there is an open subset $U$ of $C$ such that $f_n$ has no zero on
$\overline{U}$. We may assume that $\overline{U}$ is compact. Then $| f(tx)|=|f_n(tx)+(f(tx)-f_n(tx))|\geq t^{n-1} (t\cdot \min \{ f_n(x)\,;\, x\in \overline{U}\} - \max
\{ |f(x)-f_n(x)|\,;\, x\in \overline{U}\})$ for $x\in \overline{U}$ and $t>0$, so $f(tx)$ has no zero for $t\gg 0$ and  $x\in \overline{U}$, contradicting the fact that
$\mathcal{O} \cap C$ is coarsely dense in $C$.
\end{proof}

\begin{lemma} \label{lemma37}
Let $G$ be the Zariski closure of $S$ in $GL(V)$. Then $G\to V$, $g\mapsto gv_0$, is a diffeomorphism of $G$ onto an open subset of $V$.
\end{lemma}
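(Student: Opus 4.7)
The plan is to combine commutativity of $G$, Zariski-density of $\mathcal{O}$ (lemma \ref{lemma36}), and the general fact that orbits of algebraic groups on varieties are locally closed.

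First, $G$ is commutative: the Zariski-closed set $\{g\in GL(V)\,;\,gs=sg\text{ for all }s\in S\}$ contains $S$ and hence its Zariski closure $G$, so $G$ centralizes $S$; repeating the argument with $S$ replaced by $G$ shows that $G$ centralizes itself, i.e.\ is abelian. Let $H=\{g\in G\,;\,gv_0=v_0\}$ be the stabilizer of $v_0$, a Zariski-closed subgroup of $G$.

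The key observation is that $H=\{I\}$. The fixed-point set $V^H=\{v\in V\,;\,hv=v\text{ for all }h\in H\}$ is a linear subspace of $V$, and because $G$ is abelian it is $G$-invariant: for $v\in V^H$, $g\in G$, $h\in H$ one has $h(gv)=(hg)v=(gh)v=g(hv)=gv$. Since $v_0\in V^H$, the orbit $G\cdot v_0$ lies in $V^H$; as $\mathcal{O}\subseteq G\cdot v_0$ is Zariski-dense in $V$ by lemma \ref{lemma36}, we conclude $V^H=V$. Because $H\subset GL(V)$ this forces $H=\{I\}$.

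Next I would invoke the general theorem that any orbit of an algebraic group acting algebraically on an algebraic variety is locally closed in the Zariski topology (see e.g.\ \cite{Bo}). Thus $G\cdot v_0$ is locally closed in $V$, and being Zariski-dense it is therefore Zariski-open in $V$. In particular $\dim G=\dim(G\cdot v_0)=\dim_{\mathbb{R}} V$.

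Finally, I would verify that the orbit map $\phi:G\to V$, $g\mapsto gv_0$, is an analytic diffeomorphism onto the open set $G\cdot v_0$. It is injective because $H=\{I\}$. Its differential at the identity, $d\phi_e:\mathrm{Lie}(G)\to V$, $X\mapsto Xv_0$, has kernel $\mathrm{Lie}(H)=0$; by $G$-equivariance ($\phi(gh)=g\phi(h)$) the same holds at every $g\in G$. Since $\dim G=\dim_{\mathbb{R}} V$, each $d\phi_g$ is a linear isomorphism, so $\phi$ is a local analytic diffeomorphism everywhere, and being injective it is a global diffeomorphism onto its image (which is accordingly Euclidean-open in $V$). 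The main obstacle is invoking the locally-closed-orbit theorem in the real setting, which one handles by passing to the complexification $G_{\mathbb{C}}\subset GL(V_{\mathbb{C}})$ and then taking real points; the remaining steps are formal.
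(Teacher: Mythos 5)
Your argument is essentially correct but follows a genuinely different route from the paper. The paper quotes Borel--Tits \S 3.18, which for real algebraic actions gives directly that the orbit map is a submersion and that $Gv_0$ is open (Euclidean topology) in its Zariski closure; triviality of the stabilizer is then deduced \emph{from} the openness, since by commutativity the stabilizer is the same at every orbit point and only the identity fixes an open set pointwise. You instead get triviality of the stabilizer first, from the fixed-point subspace $V^H$ together with lemma \ref{lemma36} -- a clean argument that does not presuppose openness -- and then recover openness from a dimension count and the inverse function theorem. That is a legitimate alternative, and it has the virtue of needing only the dimension identity from algebraic group theory, since Euclidean openness of the image then comes for free from the local diffeomorphism property.

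The one step you must repair is the one you flag: the locally-closed-orbit theorem is a statement over an algebraically closed field, and it is false for groups of real points as you invoke it (for $\mathbb{R}^*$ acting on $\mathbb{R}$ by $t\cdot x=t^2x$ the orbit of $1$ is $\mathbb{R}_{>0}$, which is not Zariski locally closed). Your complexification fix works, but calling the remaining steps formal undersells them: you need (i) that $\mathcal{O}$ is still Zariski dense in $V\otimes\mathbb{C}$ (easy, since a complex polynomial vanishing on the real points $\mathcal{O}$ has real and imaginary parts vanishing there), (ii) that the real Zariski closure $G$ is the set of real points of the complex Zariski closure $G_{\mathbb{C}}$, and (iii) that $\dim_{\mathbb{R}}G=\dim_{\mathbb{C}}G_{\mathbb{C}}$, which uses smoothness of algebraic groups in characteristic zero. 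With these supplied, $\dim_{\mathbb{R}}G\geq\dim_{\mathbb{R}}V$ follows from openness of the complex orbit, the reverse inequality from injectivity of $d\phi_e$, and your final paragraph completes the proof.
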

\begin{proof}
By a theorem on algebraic actions \cite[$\S$ 3.18]{BoTi} the map $G\to V$, $g\mapsto gv_0$, to the orbit $Gv_0$ is a submersion and the orbit $Gv_0$ is an open subset
- with respect to the Euclidean topology - in its Zariski closure. Furthermore the isotropy group $G_v=\{ g\in G\,;\, gv=v\}$ is independent of the chosen point $v$
of the orbit, since $G_{gv}=gG_vg^{-1}$ and $G$ is abelian, and hence must be trivial, since the identity is the only element of $GL(V)$ that fixes every point of an
open set.
\end{proof}

\begin{proof}[Proof of theorem \ref{general}]
Let $G$ be the Zariski closure of $S$ in $GL(V)$. Then $G$ is an abelian real linear algebraic group and hence is the direct product of its subgroup $G_s$ of semisimple
elements and its subgroup $G_u$ of unipotent elements. Let $\pi_s :G\to G_s$ and $\pi_u :G\to G_u$ be the corresponding projection homomorphisms. Let $V_u$ be the
$G$-invariant vector subspace $\sum_{u\in G_u} (1-u)V$ of $V$. The quotient space $V_s:=V/V_u$ has a natural representation of $G$ whose kernel is $G_u$ and the
representation of $G$ on $V_s$ is semisimple. Let $p_s:V\to V_s$ be the natural projection. Then the point $p_s(v_0)$ has the orbit $S\cdot p_s(v_0)=\pi_s(S)p_s(v_0)$.
This orbit intersects the open cone $p_s(C)$ in a coarsely dense subset of $p_s(C)$. It follows from lemma \ref{Sdiag} that the closure of $\pi_s(S)$ is a subgroup of
$G_s$ and $\dim \overline{\pi_s(S)}=\dim_{\mathbb{R}}V_s$. On the other hand $\dim G_s=\dim_{\mathbb{R}}V_s$ by the preceding lemma, applied to the action of $G_s$ on
$V_s$. So $G_s^0\subset \overline{\pi_s(S)}\subset G_s$, where $H^0$ denotes the connected component of the identity in a topological group $H$. The group $G_s$ was
described in the paragraph before lemma \ref{Sdiag}. In particular  $G_s^0$ contains the group $\mathbb{R}^*_{+} \cdot \mathbbm{1}$ of homotheties $M_r$ with
$M_r(x)=r\cdot x$ for $r\in \mathbb{R}^*_{+}$.

Next we will show that the closure $\overline{S}$ of $S$ in $G$ is a subgroup of $G$ and  $\overline{S}\cdot K=G$ where $K$ is the maximal compact subgroup of $G$. It
suffices to show that for every non-zero continuous homomorphism $f:G\to\mathbb{R}$ the image $f(S)$ is dense in $\mathbb{R}$, by corollary \ref{cor25}(3). Note that
this corollary also implies that $G/K$ is a vector group, in particular connected, which can also be seen from the facts that $G_u$ is connected and $G_s$ is -
considered as a Lie group - a direct product of groups $\mathbb{R}^*$ and $\mathbb{C}^*$. Thus let $f:G\to\mathbb{R}$ be a non-zero continuous homomorphism. Let
$\varphi:Gv_0\to \mathbb{R}$ be the analytic map defined by $\varphi (gv_0)=f(g)$. Note that
\[
\varphi (gx)=f(g)+\varphi (x)
\]
for $g\in G$ and $x\in Gv_0$. Recall that $Gv_0$ is open in $V$. We know that $\mathbb{R}^*_{+} \cdot \mathbbm{1}\subset G^0$. There are two cases to consider, namely
that the restriction $f|\mathbb{R}^*_{+} \cdot \mathbbm{1}$ of $f$ to $\mathbb{R}^*_{+} \cdot \mathbbm{1}$ is the zero map or not.

\medskip

\noindent\textit{Case 1.} Suppose $f|\mathbb{R}^*_{+} \cdot \mathbbm{1}$ is the zero map. Then $\varphi$ is constant on every ray $\mathbb{R}^*_{+}x$ in $Gv_0$. Let
$C$ be our open cone such that $S\cdot v_0\cap C$ is coarsely dense in $C$. Then for every open subset $U$ of $C$ the set of $x\in U$ for which the corresponding ray
$\mathbb{R}^*_{+}x$ intersects $S\cdot v_0$ is dense in $U$, so $f(S)=\varphi (S\cdot v_0)$ contains a dense subset of the open set $\varphi (U)$. Note that $\varphi$
is an open map, namely the composition of a diffeomorphism $Gv_0\to G$ and a non-zero homomorphism  $G\to\mathbb{R}$. It follows from lemma \ref{polS} that $f(S)$ is
dense in $\mathbb{R}$.

\medskip

\noindent\textit{Case 2.} The other case is that $f|\mathbb{R}^*_{+} \cdot \mathbbm{1}$ is non-zero. We may assume that $f (e^t \mathbbm{1})=t$ for every
$t\in\mathbb{R}$. Let $x\in C$ and let $B(x,\varepsilon)$ be the ball of radius $\varepsilon$ with center $x$. Then $B(e^t x,e^t \varepsilon)=M_{e^t} B(x,\varepsilon)$
contains a point of  $S\cdot v_0$ if $e^t \varepsilon \geq D$, where $D$ is the constant in the definition of coarse density. There is a constant $c$ such that $|\varphi
(y)-\varphi (z)|\leq c\| y-z\|$ for every $y,z\in B(x,\varepsilon)$ since $\varphi$ is continuously differentiable. It follows that for every $t\in\mathbb{R}$
\begin{align*}
\begin{split}
\varphi(B(e^t x,e^t \varepsilon)) &=\varphi(M_{e^t} B(x,\varepsilon))=f(M_{e^t})+\varphi(B(x,\varepsilon))\\
                                         &=t+\varphi(B(x,\varepsilon))
\end{split}
\end{align*}
is contained in the interval in $\mathbb{R}$ with center $t$ and radius $c\cdot\varepsilon$. So for sufficiently large $t\in\mathbb{R}$ there is an element of
$\varphi(S\cdot v_0)=f(S)$ of distance at most $D\cdot c\cdot e^{-t}$ from $t$. It follows from lemma \ref{polS} that $f(S)$ is dense in $\mathbb{R}$.

The group $G$ is the direct product of $G_u$ and $G_s$ and $G_s$ is the direct product of its polar part, a product of $\mathbb{R}^*_{+}$, and its maximal compact
subgroup, a product of groups $\{ \pm 1\}$ and $S^1$. The same proof as at the end of lemma \ref{Sdiag} shows that $\overline{S}$ contains the connected component
$G^0$ of $G$.

Now $\mathbb{R}^*_{+} \cdot \mathbbm{1}$ is contained in $\overline{S}$, hence the orbit of every point under $\overline{S}$ is a cone, i.e.
$\mathbb{R}^*_{+}$-invariant. It follows that  $\overline{S}\cdot v_0$ contains a dense subset of $C$, so the closure of $\overline{S}\cdot v_0$ contains $C$. Note that
it can happen that $C$ is not contained in the orbit $\overline{S}\cdot v_0$. E.g. if $C=\mathbb{R}$, $v_0\neq 0$ then $\overline{S}\cdot v_0$ does not contain 0. Recall
that $\overline{S}$ is the closure of $S$ in the group $GL(V)$, hence contained in $\mathbb{R}^*$ for $V=\mathbb{R}$.  The general picture can be seen from theorem 4.1.
\end{proof}

\section{Orbit structure}
We determine the structure of the open orbits of $\overline{S}$. The following theorem will be applied for the group $G^*=\overline{S}$ of theorem \ref{general}. Parts
of these results are in \cite{AM1}.

\begin{theorem} \label{th41}
Let $G^*$ be an abelian subgroup of $GL(V)$ which has an orbit with an inner point. Let $G$ be the Zariski closure of $G^*$. Then the following claims hold:
\begin{enumerate}
\item The group $G^*$ is an open subgroup of $G$ and contains the connected component $G^0$ of $G$.

\item There is only a finite set of maximal $G^0$-invariant vector subspaces of $V$. They are also $G$-invariant. These are of real codimension $1$ or $2$ in $V$. Let
$H_1,\ldots,H_r$ be those of codimension $1$ and $H_{r+1},\ldots,H_d$ be those of codimension $2$. We have
\[
r+2(d-r)\leq \dim_{\mathbb{R}} V.
\]

\item Let $U$ be the complement of $\bigcup_{i=1}^d H_i$ in $V$ and let $v$ be a point of $U$. Then the map $G\to U$, $g\mapsto gv$, is an analytic diffeomorphism.

\item The quotient group $G/G^0$ is isomorphic to $(\mathbb{Z}/2\mathbb{Z})^r$. For $v\in U$ the orbit $G^0v$ is the connected component of $v$ in $U$. The closure of $G^0v$ is
the intersection of $r$ half spaces. Namely, for each $i=1,\ldots,r$ there is exactly one open  half space defined by $H_i$ which contains $v$, say $C_i$. Then
$\overline{G^0v}=\bigcap_{i=1}^r \overline{C_i}$.

\item If $V$ has a structure as complex vector space such that every $h\in G^*$ is complex linear, then $r=0$, $G$ is connected and hence $G^0=G^*=G$. Then $U$ is the
only open $G^*$-orbit and thus it is dense in $V$.
\end{enumerate}
\end{theorem}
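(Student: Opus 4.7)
Plan. I follow the structural decomposition $G = G_s \times G_u$ of the Zariski closure into semisimple and unipotent parts. \textbf{Part (1)} mirrors Lemma \ref{lemma37}: let $v_0$ be an inner point of some $G^*$-orbit; by the theorem on algebraic actions \cite{BoTi} the orbit map $\phi_{v_0}: G \to V$, $g \mapsto gv_0$, is a submersion onto $Gv_0$, which is open in its Zariski closure (necessarily all of $V$, since the closure contains a Euclidean open set). Abelianness forces the isotropy to fix the orbit pointwise, hence to be trivial, so $\phi_{v_0}$ is an analytic diffeomorphism of $G$ onto an open subset of $V$; in particular $\dim G = \dim_{\mathbb R} V$. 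Pulling back any open subset of $G^*v_0$ under $\phi_{v_0}$ shows $G^*$ is open in $G$, hence a subgroup containing $G^0$.

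\textbf{Part (2).} Write $G = G_s \times G_u$, with $G_u$ connected, and set $V_u = \sum_{u \in G_u}(1-u)V$; the commutation of $G_s$ with $G_u$ makes $V_u$ a $G$-invariant subspace. The semisimple $G_s$-module $V/V_u$ decomposes into isotypic components $\bigoplus_\chi V_\chi^{\oplus n_\chi}$; the open $G$-orbit on $V$ projects to an open $G_s$-orbit on each summand, but any such orbit lies in a scalar copy $\chi(G_s)\cdot(v_1,\dots,v_{n_\chi})$ of real dimension at most $\dim_{\mathbb R} V_\chi$, which forces $n_\chi \le 1$. Hence $V/V_u = \bigoplus_{i=1}^d V_{\chi_i}$ with distinct characters, $r$ of them one-dimensional over $\mathbb R$ and $d - r$ two-dimensional, giving $r+2(d-r) = \dim_{\mathbb R}(V/V_u) \le \dim_{\mathbb R} V$. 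A maximal $G^0$-invariant subspace $H$ has irreducible quotient $V/H$ on which the unipotent $G_u$ acts trivially, so $V_u \subset H$ and $H/V_u$ is a maximal $G_s$-invariant subspace of $V/V_u$ (maximal $G_s$- and $G_s^0$-invariants coincide since $G_s$ is abelian). These are exactly the subspaces $\bigoplus_{j \neq i} V_{\chi_j}$, yielding precisely $d$ maximal $G^0$-invariant subspaces $H_i = V_u + \bigoplus_{j \neq i} V_{\chi_j}$, each automatically $G$-invariant.

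\textbf{Parts (3)--(5).} Fix a $G_s$-stable splitting $V = V_u \oplus V_s'$, so
\[
U = V_u \oplus \prod_{i=1}^d (V_{\chi_i}\setminus\{0\})
\]
has exactly $2^r$ connected components. For $v \in U$ the map $\mathfrak g_s \to V/V_u$, $X \mapsto X \bar v$, is surjective (the open-orbit hypothesis at $v_0 \in U$ forces the joint character differential to be surjective, and $\bar v$ has all non-zero $V_{\chi_i}$-components), so $\mathfrak g v$ surjects onto $V/V_u$; combined with $\dim G = \dim V$, the $\mathfrak g$-invariance of $\mathfrak g v$, and a parallel analysis of $G_u$ inside $V_u$, this gives $\mathfrak g v = V$. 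Hence the orbit map $G \to Gv$ is an analytic diffeomorphism; the connected orbit $G^0 v$ is both open and closed in its connected component $C$ of $U$, so $G^0 v = C$. The polar decomposition on each split $\mathbb R^*$-factor of $G_s$ gives $G/G^0 \cong (\mathbb Z/2)^r$, which acts simply transitively on the $2^r$ sign-sectors of $U$, so $Gv = U$. The closure description in (4) then follows by taking closures in each factor: each real sign-sector closes to a half-line while each complex factor is already dense. Finally for (5), if $V$ is complex and $G^*$ acts complex-linearly, every irreducible real $G_s^0$-subrepresentation of $V/V_u$ is $\mathbb C$-linear and hence two-dimensional over $\mathbb R$; so $r = 0$, $G/G^0$ is trivial, and $U$ is the complement of finitely many subspaces of real codimension $\ge 2$, hence connected and dense in $V$.

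The main obstacle I foresee is verifying $\mathfrak g v = V$ for every $v \in U$: the surjection onto $V/V_u$ is immediate from the multiplicity-one decomposition, but filling in the $V_u$-direction requires careful bookkeeping of how $\ker d\chi$ and the tangent space $\mathfrak g_u \cdot v$ interact inside $V_u$. Once this is settled, the remaining arguments reduce to a transparent computation with the diagonal torus action and its $(\mathbb Z/2)^r$-component group, as in the proof of Lemma \ref{Sdiag}.
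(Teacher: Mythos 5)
Your parts (1) and (2) follow the paper's architecture (lemma \ref{lemma37} for openness of $G^*$ in $G$, the splitting $G=G_s\times G_u$, and a multiplicity-one analysis of $V/V_u$), but the heart of claim (3) --- that $Gv=U$, equivalently that the isotropy is trivial and $\mathfrak g v=V$ at \emph{every} $v\in U$, not just at the point $v_0$ whose orbit has an inner point --- is exactly the step you leave open, and it is not mere bookkeeping. Your surjection $\mathfrak g_s\bar v=V/V_u$ only gives $\mathfrak g v+V_u=V$; nothing in your argument controls $\mathfrak g_u v$ inside $V_u$ at an arbitrary $v\in U$, and your subsequent deduction that ``$G^0v$ is open and closed in its component of $U$'' presupposes that all points of $U$ already have open orbits, so it cannot be used to bootstrap. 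The paper closes this gap with a genuine extra input: by Moore's lemma \cite[Lemma 5.3]{Moore}, orbits of unipotent algebraic groups are closed, so $G_uv_0$ is open (dimension count) and closed in the connected affine subspace $v_0+\bigoplus H_i^*$, hence equals it; combined with the identification of $G_s$ with the full scalar group $\bigoplus_{i\le r}\left(\mathbb R^*\cdot\mathbbm 1\right)\oplus\bigoplus_{i>r}\left(\mathbb C^*\cdot\mathbbm 1\right)$ (an algebraic subgroup of equal dimension), this yields $Gv_0=U$ outright, and the diffeomorphism statement for every $v\in U$ then follows by transitivity rather than by a pointwise infinitesimal computation. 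You must either import this closedness-of-unipotent-orbits argument (or an induction along the unipotent filtration); as written, the key claim of (3) is unproved.

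There are also smaller unjustified assertions. The parenthetical ``maximal $G_s$- and $G_s^0$-invariants coincide since $G_s$ is abelian'' is not a proof: two distinct algebraic characters of $G_s$ can agree on $G_s^0$ (differing by a sign character on a component), in which case $V/V_u$ would have a $G_s^0$-isotypic piece of multiplicity two and infinitely many maximal $G^0$-invariant subspaces, most of them not $G$-invariant; what rules this out is the open orbit (a finite union of $G_s^0$-orbits of too small dimension cannot be open), and this needs to be said. Your claims $G/G^0\cong(\mathbb Z/2\mathbb Z)^r$ and that the component group permutes the $2^r$ sectors transitively require $G_s$ to be the \emph{full} group $(\mathbb R^*)^r\times(\mathbb C^*)^{d-r}$, not merely an open subgroup of it; the paper obtains this from Zariski closedness together with the dimension count. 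Finally, in (5) the statement that every irreducible real $G_s^0$-subrepresentation of $V/V_u$ is automatically $\mathbb C$-linear is false in general (a real line inside a complex line on which $G_s^0$ acts by real scalars is a counterexample); again it is the multiplicity-one consequence of the open-orbit hypothesis that excludes $r>0$, and that argument should be made explicit.
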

\begin{proof}
The group $G^*$ contains an open subset of $G$, by lemma \ref{lemma37}. Thus, being a group, $G^*$ is open in $G$ and hence a closed subgroup of $G$. This implies
(1). The group $G$ is an abelian real linear algebraic group so it has a finite number of connected components. It is also a direct product of its subgroup $G_s$ of
semisimple elements and its subgroup $G_u$ of unipotent elements.

For the remaining claims we start with special cases.
\medskip

\noindent (a) $G_s=\mathbb{R}^* \cdot \mathbbm{1}$. There is a non-zero linear form $l:V\to\mathbb{R}$ which is fixed by $G_u$. So $G_u$ leaves every hyperplane
parallel to the kernel $H$ of $l$ invariant. There is a vector $v$ with a somewhere dense orbit, so $l(v)\neq 0$. Now $\varphi_v:G\to Gv$, $g\mapsto gv$, is a
diffeomorphism onto an open subset of $V$ which maps  $G_s=\mathbb{R}^* \cdot \mathbbm{1}$ onto the line $\mathbb{R}v$ with the origin removed and $G_u$ to the affine
hyperplane $H+v$. Comparing dimensions we see that $G_uv$ is a neighborhood of $v$ in $H+v$ and hence $G_uv$ is an open subset of $H+v$. On the other hand, the orbit
of any vector under a unipotent Zariski closed subgroup of $GL(V)$ is closed \cite[Lemma 5.3]{Moore}. It follows that $G_uv=H+v$. Now $G_u(tv)=tG_uv=tv+H$, so the
orbit of any vector $v\notin H$ is $V\setminus H$. This shows that $H$ is the only $G$ or $G^0$-invariant hyperplane in $V$. This is the case $r=d=1$ of our theorem.
\medskip

\noindent (b) $V$ has a complex structure such that $G^*$ consists of complex linear maps and $G_s=\mathbb{C}^* \cdot \mathbbm{1}$. Thus $G$ is connected and hence
$G^0=G^*=G$. Exactly the same arguments as above show that $Gv=V\setminus H$ for every $v\notin H$. In particular $H$ is the only maximal $G$-invariant (real or
complex) vector subspace of $V$. This is the case $r=0$, $d=1$, of our theorem.
\medskip

\noindent (c) The general case. Every isotypic component of the $G_s$-module $V$ is of the type described in (a) or (b), since the hypothesis that the orbit of $G^*$ has
an inner point carries over to the isotypic components and then to their semisimple quotients $V$ modulo $\sum_{g\in G_u} (1-g)V$, so the other irreducible algebraic
subgroups of $\mathbb{C}^*$ do not occur in the simple quotients of $V$. Note that if a simple quotient is $\mathbb{C}$ with the action of $\mathbb{C}^* \cdot
\mathbbm{1}$, considered as a real vector space, then the isotypic component has a (unique up to complex conjugation) structure as a complex vector space and every $g\in
G$ acts by complex linear automorphisms, since $G$ is commutative. So, back to the general case, $V$ has a decomposition into isotypic modules $V_1\oplus \ldots \oplus
V_r\oplus V_{r+1}\oplus \ldots \oplus V_d$ where $G_s$ acts on $V_i$ as $\mathbb{R}^* \cdot \mathbbm{1}$ for $i\leq r$ and for $i>r$ the $G_s$-module $V_i$ has a
structure as complex vector space, $G_s$ acts as $\mathbb{C}^* \cdot \mathbbm{1}$ and $G$ acts by complex linear automorphisms. Let $H$ be a maximal $G$-invariant real
vector subspace of $V$. Then there is an index $i$, $1\leq i\leq d$, such that $H=\bigoplus_{j\neq i} V_j\oplus H_i^*$ and $H_i^*$ is a maximal $G$-invariant subspace of
$V_i$. We thus are with $H_i^*\subset V_i$ in case (a) or (b). In particular, for a given index $i$ there is exactly one such subspace. Let us call it $H_i$. The natural
representation of $G$ on $V / \oplus H_i^*= \bigoplus V_i/H_i^*$ has image in $\bigoplus_{i=1}^r \left( \mathbb{R}^* \cdot \mathbbm{1}_{V_i/H_i^*} \right)\oplus
\bigoplus_{i=r+1}^d \left( \mathbb{C}^* \cdot \mathbbm{1}_{V_i/H_i^*}\right)$ and has an orbit with an inner point. So the dimension of $G_s$ is $r+2(d-r)=\dim V/ \oplus
H_i^*$. It follows that $\oplus H_i^*$ has the same codimension in $V$ as $G_u$ has in $G$. But $G_u$ leaves $\oplus H_i^*$ invariant. Now let $v_0$ be a point of $V$
whose $G^*$-orbit has an inner point. Then $v_0\notin \bigcup H_i$, in other words, for every component $v_0^i$ of $v_0$ in $V_i$ we have $v_0^i\notin H_i^*$. The map
$G\to Gv_0$, $g\mapsto gv_0$, is a diffeomorphism of $G$ onto an open subset of $V$, by lemma \ref{lemma37}. So, comparing dimensions, we see that the orbit of $G_u$ in
$v_0\oplus \bigoplus H_i^*$ is open. But $G_uv_0$ is also closed since every orbit of a unipotent algebraic group is closed, so $G_uv_0=v_0\oplus \bigoplus H_i^*$. On
the other hand $G=G_s\times G_u$, and $G_s$ is an algebraic subgroup of $\bigoplus_{i=1}^r \left(\mathbb{R}^* \cdot \mathbbm{1}_{V_i} \right)\oplus \bigoplus_{i=r+1}^d
\left( \mathbb{C}^* \cdot \mathbbm{1}_{V_i}\right)$ of the same dimension, hence they are equal. This shows claim (3).

If $V$ has a complex structure, then every simple quotient of the $G$-module $V$ again has an open orbit, so we are in case (b), which implies (5). Part (4) spells
out the connected components of $G$, namely $G/ G^0 \cong G_s/G_s^0 \cong (\mathbb{R}^*)^r / (\mathbb{R}^*_{+})^r \cong (\mathbb{Z}/2\mathbb{Z})^r$.
\end{proof}

\section{Open problems and remarks}
\noindent\textit{Question 1.}\,\, Does theorem \ref{posneg} also hold for subsemigroups of topological vector spaces? Or at least corollary \ref{SdenseR}?

\medskip

G. Soifer asked us the following question. Let $S$ be a finitely generated commutative linear semigroup. Suppose $S$ has an orbit which is coarsely dense along a
spanning set of rays. Here a set $A$ is called coarsely dense along a subset $Y$ of a metric space $(X,d)$ if the open balls of radius $D$ with centers at points of $A$
cover $Y$ for some positive number $D$. Does this imply that there is an orbit which is dense in some open set? The answer is no to this more general question. Here are
two examples.

1) Consider the one-parameter group $G$ of diagonal positive matrices in $SL_2(\mathbb{R})$. Every dense finitely generated subsemigroup of $G$ is coarsely dense along
the two positive axes, but has no somewhere dense orbit.

2) Consider the group $G$ of upper triangular $2\times 2$-matrices whose diagonal entries are equal and positive. Let $S$ be a finitely generated subsemigroup of $G$
whose closure are all the matrices of $G$ whose off diagonal entries are integers. This semigroup has an orbit which is (coarsely) dense along infinitely many rays, but
has no somewhere dense orbit.

The following question is open.

\medskip

\noindent\textit{Question 2.}\,\,  Can one find a finite number $R_1,\ldots,R_m$ of rays such that every finitely generated commutative linear semigroup $S$  which has a
coarsely dense orbit along $\bigcup_{i=1}^mR_i$ also has a somewhere dense orbit?

\medskip

\noindent \textbf{Acknowledgement.} We thank the referee for valuable suggestions which helped us to improve the presentation.

\end{document}